\documentclass[11pt]{article}
\usepackage{amsmath,amssymb,tikz-cd}
\usepackage{mathrsfs}
\usepackage{mathrsfs}
\usepackage{mathrsfs}
\usepackage{mathtools}
\usepackage{amsfonts}
\usepackage{amssymb}
\usepackage{amsmath}
\usepackage{amsthm}
\usepackage{color}
\usepackage{enumitem}
\usepackage{bm}
\theoremstyle{plain}
\usepackage{amsthm}
\usepackage{url}

\theoremstyle{plain}
\newtheorem{theorem}{Theorem}[section]
\newtheorem{lemma}[theorem]{Lemma}
\newtheorem{proposition}[theorem]{Proposition}
\numberwithin{equation}{section}
\newtheorem{cor}[theorem]{Corollary}

\newcommand{\e}{\mathrm{e}}
\newcommand{\R}{\mathbb{R}}

\newcommand{\N}{\mathbb{N}}
\newcommand{\Z}{\mathbb{Z}}
\newcommand{\PP}{\mathcal{P}}

\newcommand{\supp}{\operatorname{supp}}
\begin{document}
		\title{{\Large\bf Rudin's Problem on Uniformly Bounded Homogeneous Orthonormal Bases}}
		\author{{\normalsize Penghui Wang, Fan Zhang and Zeyou Zhu}}
	\date{}
	\maketitle
	\begin{minipage}{14cm} {\bf Abstract.}
		Rudin's problem asks whether, in the unit ball of $\mathbb C^d$ with $d\ge2$ fixed, the Hardy space $H^2(\partial\mathbb B_d)$ admits a uniformly bounded orthonormal basis consisting of  homogeneous holomorphic polynomials. For $d=2$ and $d=3$ the answer is due to Bourgain, but the general-dimensional case has remained open. This paper gives an affirmative answer to Rudin's problem in every fixed dimension. More precisely, we prove that there is an orthonormal basis consisting of homogeneous polynomials $\{p_j\}_{j=1}^\infty$ of $H^2(\partial\mathbb B_d)$ such that 
		$$
		\|p_j\|_{L^\infty(\partial\mathbb B_d)}\leq \exp\left(\frac12d\log d+O(d)\right).
		$$
		Here the implicit constant in $O(d)$ is independent of $d$ and $j$.
		
	\noindent{\bf 2020 Mathematics Subject Classification:} 32A35;  42B30.
		
		\medskip
		\noindent\textbf{Keywords.}
		Uniformly bounded orthonormal bases;
		homogeneous holomorphic polynomials;
		Rudin's problem; reproducing kernel Hilbert spaces. \\
	\end{minipage}
	\maketitle

	\bigskip

	\section{Introduction}
			Let $\mathbb B_d\subseteq\mathbb C^d$ be the open unit ball,
			and let $\sigma_d$ be the normalized surface measure on $\partial\mathbb B_d$.
			The Hardy space $H^2(\partial\mathbb B_d)$ is one of the most
			important function spaces over the unit ball, which is
			defined by
			\[
			H^2(\partial\mathbb B_d)
			=
			\left\{
			f:\mathbb B_d\to\mathbb C:
			f \text{ is holomorphic on } \mathbb B_d
			\text{ and }
			\sup_{0<r<1}
			\int_{\partial\mathbb B_d}
			|f(r\zeta)|^2\,d\sigma_d(\zeta)<\infty
			\right\}.
			\]
			We identify each such function with its radial boundary values.
			It is well-known that $H^2(\partial\mathbb B_d)$ is a
			reproducing kernel Hilbert space of holomorphic functions with the reproducing kernel
			\[
			K(z,w)
			=
			\frac{1}{(1-\langle z,w\rangle)^d}, \quad  z,w\in\mathbb B_d.
			\]
			
			As a cornerstone of operator theory on function space,
			Beurling's theorem \cite{Beurling1949} characterizes
			the invariant subspaces
			of the shift operator by inner functions in the
			Hardy space over the unit disc.
			Motivated by Beurling's theorem, Rudin \cite{Rudin1980} asked
			whether $H^2(\partial\mathbb B_d)$ admits a nonconstant inner function. 
			To study the existence and the structure of inner functions,
			Rudin \cite{Rudin1986} proposed the problem of whether
			$H^2(\partial\mathbb B_d)$ admits a uniformly bounded
			orthonormal basis consisting of homogeneous holomorphic
			polynomials.
			Although the existence problem of inner functions in
			$H^2(\partial\mathbb B_d)$ was resolved by
			\cite{Aleksandrov1983,Low1982},
			the existence of a uniformly bounded orthonormal basis consisting of homogeneous holomorphic polynomials had remained open in general dimensions.
			
			For $d=2,3$, Rudin's problem was resolved by Bourgain
			in \cite{Bourgain1985,Bourgain2016}, respectively.
			The two-dimensional construction uses the
			Golay--Rudin--Shapiro signs, which can also be replaced
			by suitable quadratic phases using estimates of Hardy
			and Littlewood \cite{HardyLittlewood1914}.
			The three-dimensional construction uses a quadratic phase,
			a rearrangement of the triangular index set,
			and techniques based on the Haar system.
			
			In arbitrary dimension,
			a related study due to Ryll and Wojtaszczyk
			\cite{RyllWojtaszczyk1983} showed
			that $\mathcal P_N^{(d)}$ contains a polynomial $p$
			satisfying
			\[
			\|p\|_\infty
			=
			\sup_{z\in\partial\mathbb B_d}|p(z)|
			=1
			\quad\text{and}\quad
			\|p\|_{H^2(\partial\mathbb B_d)}\geq C_d,
			\]
			where $C_d>0$ is independent of $N$ and
			\[
			\mathcal P_N^{(d)}
			=
			\operatorname{span}
			\left\{
			z^\alpha:
			\alpha\in\mathbb N_0^d,\ |\alpha|=N
			\right\},\quad
			\mathbb N_0^d
			=\{(\alpha_1,\ldots,\alpha_d):\alpha_j\in\{0,1,2,\ldots\},
			\ 1\le j\le d\}.
			\]
			Obviously,
			\[
			D_{N,d}
			=
			\dim\mathcal P_N^{(d)}
			=
			\binom{N+d-1}{d-1}.
			\]
			 Shiffman \cite{Shiffman2015}
			constructed uniformly bounded orthonormal systems
			in $\mathcal P_N^{(d)}$ with at least
			$\beta_dD_{N,d}$ elements, where $\beta_d>0$.
			Marzo and Ortega-Cerd\`a
			\cite{MarzoOrtegaCerda2015} obtained such systems
			with at least $(1-\varepsilon)D_{N,d}$ elements
			for every $\varepsilon>0$, with a bound depending
			on $d$ and $\varepsilon$.
			Without the homogeneity requirement, uniformly bounded
			polynomial orthonormal bases are already known from
			the constructions of Ovsepian--Pe{\l}czy{\'n}ski
			\cite{OvsepianPelczynski1975} and Olevskii
			\cite{Olevskii1975}, together with the result of
			Ryll and Wojtaszczyk; see also
			\cite[Appendix~I]{Rudin1986}.
			The homogeneous problem requires a complete
			orthonormal basis in each $\mathcal P_N^{(d)}$,
			with a common bound independent of $N$.
			
			Rudin's problem can be naturally considered in the framework of unitarily invariant reproducing kernel Hilbert spaces. The theory of reproducing kernel Hilbert spaces was developed
			by Aronszajn \cite{Aronszajn1950}.
			On the unit ball, the unitarily invariant spaces include
			the Hardy and weighted Bergman spaces, as well as the
			Drury--Arveson space, which plays a central role in
			multivariable operator theory \cite{Drury1978,Arveson1998}.
			A holomorphic reproducing kernel is unitarily invariant if
			\[
			K(Uz,Uw)=K(z,w)
			\]
			for every unitary map $U$ on $\mathbb C^d$.
			These spaces have been
			studied extensively; see, for example,
			\cite{GreeneRichterSundberg2002,GuoHuXu2004,Arveson2007,
	Hartz2017,BickelHartzMcCarthy2018,ClouatreHartz2018,
	AlemanHartzMcCarthyRichter2022,ChalmoukisHartz2022}
	and the references therein.
	In this paper, we give a direct construction in every dimension and
	establish a coefficient criterion for 
	unitarily invariant reproducing kernel Hilbert spaces on $\mathbb B_d$. 
	Let $K$ be a reproducing kernel on $\mathbb B_d$ of the form
	\[
	K(z,w)=\sum\limits_{m=0}^{\infty}a_m\langle z,w\rangle^m,
	\quad
	a_m>0, 
	\]
	and let $H=H(K)$ be the associated Hilbert space on
	$\mathbb B_d$. 
	Its homogeneous components are mutually
	orthogonal, and
	\[
	\|p\|_H^2=\frac{D_{N,d}}{a_N}\|p\|_{H^2(\partial\mathbb B_d)}^2,
	\qquad p\in\mathcal P_N^{(d)}.
	\]
	Our main result is the following theorem.
	
	\begin{theorem}\label{thm:mainK}
	Let $d\in\mathbb N$. Then the following are equivalent:
	\begin{enumerate}
		\item[\textup{(i)}]
		$\displaystyle\sup_{m\geq0}\frac{a_m}{D_{m,d}}<\infty$;
		\item[\textup{(ii)}]
		$H$ admits a uniformly bounded orthonormal basis
		consisting of homogeneous holomorphic polynomials.
	\end{enumerate}
	In this case, the basis $\{q_n\}_{n\geq1}$ can be chosen
	so that
	\[
	\sup_{n\geq1}\|q_n\|_\infty
	\leq
	\exp\left(\frac12d\log d+O(d)\right)
	\left(
	\sup_{m\geq0}\frac{a_m}{D_{m,d}}
	\right)^{\frac 12},
	\]
	where the implicit constant in $O(d)$ is independent of
	$d$ and the coefficient sequence $(a_m)$.
	\end{theorem}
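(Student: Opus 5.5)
The proof splits into a reduction to the Hardy space $H^2(\partial\mathbb B_d)$ and a construction there. Since the homogeneous components $\mathcal P_N^{(d)}$ are mutually orthogonal in $H$ and span a dense subspace, an orthonormal basis of $H$ made of homogeneous polynomials is just a union over $N$ of orthonormal bases of the spaces $(\mathcal P_N^{(d)},\|\cdot\|_H)$.

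\emph{Direction (ii)$\Rightarrow$(i).} The reproducing kernel of $\mathcal P_N^{(d)}$ in $H$ is $a_N\langle z,w\rangle^N$. So for any $H$-orthonormal basis $\{q_j\}_{j=1}^{D_{N,d}}$ of $\mathcal P_N^{(d)}$ and every $z\in\partial\mathbb B_d$,
\[
\sum_{j=1}^{D_{N,d}}|q_j(z)|^2=a_N .
\]
If $\|q_j\|_\infty\le M$ for all $j$, this gives $a_N\le D_{N,d}M^2$, which is (i).

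\emph{Direction (i)$\Rightarrow$(ii).} By the norm identity $\|p\|_H^2=\frac{D_{N,d}}{a_N}\|p\|^2_{H^2(\partial\mathbb B_d)}$ on $\mathcal P_N^{(d)}$, every $H^2$-orthonormal basis $\{p_j\}$ of $\mathcal P_N^{(d)}$ yields an $H$-orthonormal basis $q_j=\sqrt{a_N/D_{N,d}}\,p_j$. This gives the stated bound with the factor $(\sup_m a_m/D_{m,d})^{1/2}$. Everything therefore reduces to the Hardy case: for each $N$, find an orthonormal basis of $\mathcal P_N^{(d)}\subset H^2(\partial\mathbb B_d)$ with sup norms at most $\exp(\tfrac12 d\log d+O(d))$, uniformly in $N$.

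\emph{Hardy construction.} I would start from the normalized monomials
\[
e_\alpha=\Big(\tfrac{(N+d-1)!}{(d-1)!\,\alpha!}\Big)^{1/2}z^\alpha,\qquad |\alpha|=N .
\]
On the sphere $|e_\alpha(z)|^2$ is, up to the factor $D_{N,d}$, a multinomial probability in $\alpha$ with parameters $(|z_1|^2,\dots,|z_d|^2)$, and $\sum_\alpha|e_\alpha(z)|^2=D_{N,d}$. The basis will be $p=\sum_\alpha c_\alpha e_\alpha$ for a unitary matrix $(c_\alpha)$ built in two steps.
\begin{enumerate}
\item Partition the simplex $\{|\alpha|=N\}$ into blocks that are discrete boxes of side comparable to $\sqrt N$, i.e.\ the scale of the multinomial fluctuations. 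Fill in the boundary regions (coordinates near $0$) with smaller or lower-dimensional boxes, handled by induction on $d$.
\item Within each block of size $L$, use the flat unitary matrix with entries $L^{-1/2}\exp\big(2\pi i(k\cdot\alpha+\theta(\alpha))\big)$, where $\theta$ is a quadratic phase, as in the Hardy--Littlewood variant of Bourgain's $d=2$ argument.
\end{enumerate}

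For fixed $z$, the terms contributing substantially come from only $O_d(1)$ blocks near $\alpha\approx N(|z_1|^2,\dots,|z_d|^2)$. On each such block, summation by parts against the slowly varying multinomial weight, combined with the Gauss-sum bound $|\sum e^{2\pi i\theta}|\lesssim\sqrt L$ on subboxes, should give $|p(z)|=O_d(1)$. Tracking constants, the number of relevant blocks and the multinomial normalization ($D_{N,d}/N^{d-1}\sim 1/(d-1)!$ against a box volume $\sim N^{(d-1)/2}$) should produce the factor $\exp(\tfrac12 d\log d+O(d))$ via Stirling.

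The main obstacle is the last step: obtaining square-root cancellation uniformly in $z$. The difficulty is greatest near the boundary of the simplex, where the multinomial weight is not flat on the scale of the blocks, and in making the block decomposition exactly tile the simplex while every block remains Gauss-sum friendly. I would first try an inductive slicing in the last coordinate, $\alpha=(\alpha',\alpha_d)$, reducing to the $(d-1)$-dimensional construction on each slice with a one-dimensional quadratic phase in $\alpha_d$. The target is to lose only a factor $O(\sqrt d)$ per dimension, which accumulates to $d^{d/2}e^{O(d)}$.
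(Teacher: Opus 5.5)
The Hardy-space construction has a genuine gap, and it is more basic than the estimate you leave open. Your two reductions are fine. The rescaling $q_j=\sqrt{a_N/D_{N,d}}\,p_j$ is exactly the paper's, and the kernel identity $\sum_j|q_j(z)|^2=a_N$ on $\partial\mathbb B_d$ is a valid substitute for the paper's bound $a_m/D_{m,d}=\|q\|_{H^2(\partial\mathbb B_d)}^2\le\|q\|_\infty^2$.

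\textbf{The block scale is wrong.} A block-diagonal unitary with blocks of side $\asymp\sqrt N$ cannot give uniformly bounded polynomials, whatever the phases.
\begin{enumerate}
\item If $p_j=\sum_{\alpha\in B}c_{j\alpha}e_\alpha$ with $(c_{j\alpha})$ unitary on a block $B$, then $\sum_j|p_j(z)|^2=\sum_{\alpha\in B}|e_\alpha(z)|^2$. This equals $D_{N,d}$ times the multinomial probability of $B$ with parameters $(|z_1|^2,\dots,|z_d|^2)$.
\item Take $|z_j|^2=1/d$. By Chebyshev, the multinomial law puts mass at least $\frac12$ on the $O_d(\sqrt N)$-neighbourhood of $(N/d,\dots,N/d)$.
\item The blocks there have side $O(\sqrt N)$. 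So the union $U$ of blocks meeting this neighbourhood has $|U|=O_d(N^{(d-1)/2})$ points and carries exactly $|U|$ basis elements.
\item Their squares sum to at least $D_{N,d}/2$ at $z$. Hence one of them has $|p_j(z)|^2\gtrsim_d N^{d-1}/N^{(d-1)/2}=N^{(d-1)/2}$.
\end{enumerate}
Your own heuristic says the same: $L^{-1/2}\cdot\sqrt L\cdot\max_\alpha|e_\alpha(z)|\asymp_d N^{(d-1)/4}$, not $O_d(1)$. To be bounded, each basis element must spread over a positive proportion of the simplex with coefficients of size about $D_{N,d}^{-1/2}$. For $d=2$ the paper's construction is exactly the full Fourier matrix of order $N+1$ twisted by a quadratic phase.

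\textbf{What the paper does instead.} Once the pieces must have size $\asymp N$ in every direction, you face the real difficulty. You would need to cover the simplex by $O_d(1)$ large pieces on each of which a Fourier-type sum with quadratic phase is controlled uniformly in $z$, including near the faces. ``Handled by induction on $d$'' does not address this, and the paper avoids it entirely.
\begin{enumerate}
\item It identifies $\Lambda_{N,d}$ with the $(d-1)$-subsets of $\{0,\dots,N+d-2\}$ via $\beta_j(\alpha)=\alpha_1+\dots+\alpha_j+j-1$.
\item It uses the $(d-1)\times(d-1)$ minors of the unitary Fourier matrix of order $N+d-1$. By Cauchy--Binet these form a unitary matrix, so no tiling is needed.
\item Expanding each minor writes every basis element as a signed sum of $(d-1)!$ coordinate rotations of the single polynomial $F_{N,d}$ with phase $\e(\sqrt2\sum_k\alpha_k^2)$.
\item The factor $\exp(\frac12 d\log d)$ comes from $(d-1)!\sqrt{D_{N,d}}\,(N+d-1)^{-(d-1)/2}\le\sqrt{(d-1)!}$.
\end{enumerate}
The bound $\|F_{N,d}\|_\infty\le A^dd^{1/4}$ has three ingredients. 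The multinomial weights are factored into Poisson weights $f_{Np_j}(\alpha_j)$; coordinates with $Np_j<1$ are summed trivially, which is how the boundary is handled. Fourier inversion is applied in the largest coordinate. Finally, a Weyl-differencing and Poisson-summation bound controls $\sum_{n\in\Z^r}\prod_a g_a(\cdot)\,\e(\sqrt2\,n^TA_{r,1}n+\ell\cdot n)$, using the Diophantine estimate $\|q\sqrt2\,n\|_{\R/\Z}\gtrsim_q|n|^{-1}$.
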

	
	For the Hardy space, the reproducing kernel is
	\[
	K(z,w)
	=
	\frac{1}{(1-\langle z,w\rangle)^d}
	=
	\sum_{m=0}^{\infty}D_{m,d}\langle z,w\rangle^m.
	\]
	Thus Theorem~\ref{thm:mainK} gives an affirmative answer
	to Rudin's problem.
	
	\begin{cor}\label{cor:rudin}
	For every $d\in\mathbb N$ and $N\geq0$, the space
	$\mathcal P_N^{(d)}$ admits an orthonormal basis
	$\{p_{N,j}\}_{j=1}^{D_{N,d}}$ in $H^2(\partial \mathbb B_d)$
	satisfying
	\[
	\|p_{N,j}\|_\infty
	\leq \exp\left(\frac12d\log d+O(d)\right),
	\qquad 1\leq j\leq D_{N,d},
	\]
	where the implicit constant in $O(d)$ is independent of
	$d$, $N$, and $j$.
	\end{cor}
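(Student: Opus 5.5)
The plan is to specialize Theorem~\ref{thm:mainK} to the Hardy kernel and then split the resulting basis by degree.

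First, take $a_m=D_{m,d}$. By the expansion displayed just before the corollary,
\[
\frac{1}{(1-\langle z,w\rangle)^d}=\sum_{m\ge0}D_{m,d}\langle z,w\rangle^m ,
\]
so $H(K)=H^2(\partial\mathbb B_d)$. Moreover, the identity $\|p\|_H^2=\frac{D_{N,d}}{a_N}\|p\|_{H^2(\partial\mathbb B_d)}^2$ shows that the two norms agree on every $\mathcal P_N^{(d)}$. Since $\sup_m a_m/D_{m,d}=1<\infty$, condition (i) holds. Theorem~\ref{thm:mainK} then supplies an orthonormal basis $\{q_n\}$ of $H^2(\partial\mathbb B_d)$ consisting of homogeneous polynomials, with $\sup_n\|q_n\|_\infty\le\exp\left(\frac12 d\log d+O(d)\right)\cdot 1^{1/2}$. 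The implicit constant is uniform in $d$ and in the coefficient sequence, hence also in $N$ and $j$.

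Next, fix $N\ge0$ and let $\{p_{N,j}\}$ be those $q_n$ of degree exactly $N$. These lie in $\mathcal P_N^{(d)}$ and are orthonormal, so it remains to check that they span $\mathcal P_N^{(d)}$. Let $f\in\mathcal P_N^{(d)}$ be orthogonal to every $p_{N,j}$. Homogeneous components of different degrees are mutually orthogonal, so $f$ is also orthogonal to every $q_n$ of degree $\neq N$. Thus $f\perp q_n$ for all $n$, and completeness of $\{q_n\}$ gives $f=0$.

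Hence $\{p_{N,j}\}$ is an orthonormal basis of the finite-dimensional space $\mathcal P_N^{(d)}$, so it has exactly $D_{N,d}$ elements. Each element inherits the uniform sup-norm bound, which gives the corollary. There is no genuine obstacle here beyond Theorem~\ref{thm:mainK} itself. The only points needing care are the norm identification between $H$ and $H^2(\partial\mathbb B_d)$ and the degree-by-degree completeness argument, both handled above.
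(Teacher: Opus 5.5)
Your proposal is correct and follows the paper's route: the corollary comes from specializing Theorem~\ref{thm:mainK} to the Hardy kernel $a_m=D_{m,d}$, for which $\sup_m a_m/D_{m,d}=1$. The only difference is that the paper's construction already produces the basis degree by degree (see \eqref{hardy}, obtained from Lemma~\ref{lem:orthonormal} and Proposition~\ref{prop:seed}), whereas you recover the per-degree bases from an arbitrary homogeneous orthonormal basis via your orthogonality argument, and that step is sound.
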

	Our construction works directly on the simplex of
	multi-indices.
	For $d\geq2$, we multiply the normalized monomials by
	a quadratic phase and combine them using the
	$(d-1)\times(d-1)$ minors of the normalized discrete
	Fourier matrix of order $N+d-1$.
	The Cauchy--Binet formula shows that these polynomials
	form an orthonormal basis.
	The same formula works for every degree $N$ and requires
	neither a piecewise rearrangement of the simplex nor
	an absorption argument.
	Expanding the determinants writes each basis polynomial
	as a finite linear combination of coordinate rotations
	of the single polynomial
	\[
	F_{N,d}(z)
	=
	\sum_{|\alpha|=N}
	\exp\left(2\pi i\sqrt2\sum_{k=1}^d\alpha_k^2\right)
	\sqrt{\frac{N!}{\alpha!}}\,z^\alpha,
	\]
	where the sum is over the whole simplex.
	Write $p_{N,\rho}$ for the basis polynomials constructed in Section~3.
	Since these rotations preserve the unit sphere,
	the determinant expansion gives
	\[
	\max_\rho\|p_{N,\rho}\|_\infty
	\leq
	\sqrt{(d-1)!}\,\|F_{N,d}\|_\infty.
	\]
	Thus a bound for this one polynomial controls the
	whole basis, without estimates for sums restricted
	to separate pieces of the simplex.
	
	To estimate $F_{N,d}$ on $\partial \mathbb B_d$, we write the square-root
	multinomial weights as a product of square-root Poisson
	weights with means $N|z_j|^2$, multiplied by a factor
	depending only on $N$.
	For $N\geq1$, this factor is
	$e^{N/2}\sqrt{N!}/N^{N/2}$.
	Keeping track of the dependence on $d$ in a weighted
	quadratic exponential sum estimate, we obtain
	\[
	\|F_{N,d}\|_\infty\leq A^d d^{\frac 14},
	\]
	where $A\geq1$ is independent of $d$ and $N$.
	The bound is independent of $N$ and holds uniformly
	on the whole sphere, including as some coordinates
	tend to zero.
	Combining the two estimates gives
	\[
	\max_{\rho}\|p_{N,\rho}\|_\infty
	\leq \exp\left(\frac12d\log d+O(d)\right),
	\]
	where the implicit constant in $O(d)$ is independent of $d$, $N$, and $\rho$.
	Taking these bases over all degrees gives an orthonormal
	basis of the Hardy space.
	This construction is the main step in the proof of
	Theorem~\ref{thm:mainK}.
	The homogeneous norm identity above then allows us
	to obtain an orthonormal basis of $H(K)$ by rescaling
	the Hardy basis in each degree.
	The coefficient condition in the theorem bounds these
	rescaling factors uniformly, so the resulting basis
	is uniformly bounded.
	
	This paper is organized as follows. In Section~\ref{sec:main},
	we  establish the lemmas
	needed for the proof. In Section~\ref{sec:proof}, we give the explicit construction and complete the proof of Theorem~\ref{thm:mainK}.

\section{Preliminaries}\label{sec:main}\label{sec:preliminaries}

Throughout the paper, unless otherwise specified, inner products on
$\mathcal P_N^{(d)}$ are inherited from $H^2(\partial\mathbb B_d)$.
For $t\in\R$, write $\|t\|_{\R/\Z}=\inf\limits_{k\in\Z}|t-k|$.

\begin{lemma}\label{lem:1d-intro}
	For fixed $q\in\mathbb Q\setminus\{0\}$, there exists a constant
	$C_q>0$, depending only on $q$, such that, for all $M\ge1$, $\eta\ge0$, and $c,b\in\R$,
	\[
	\sum\limits_{\substack{k\in\Z\\|k-c|\le \eta M}}
	\big(1+M\|q\sqrt2\,k+b\|_{\R/\Z}\big)^{-2}
	\le C_q(1+\eta).
	\]
\end{lemma}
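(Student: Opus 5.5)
The plan is to use the fact that $\theta=q\sqrt2$ is a badly approximable irrational. Since $q\in\mathbb Q\setminus\{0\}$ and $\sqrt2$ is a quadratic irrational, $\theta$ is a quadratic irrational. Its continued fraction expansion is therefore eventually periodic, hence has bounded partial quotients. Concretely, I would write $q=a/b$ and use $|\sqrt2 m-n|\ge 1/(3m)$ for integers $m\ge1$, which comes from $|2m^2-n^2|\ge1$. This gives a constant $c_q>0$ such that
\[
\|\theta k\|_{\R/\Z}\ge \frac{c_q}{|k|}\qquad (k\in\Z\setminus\{0\}).
\]

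The key consequence is a gap estimate. The integers $k$ with $|k-c|\le\eta M$ form a block of at most $L=2\eta M+1$ consecutive integers. For any block of length $L'$, the points $\theta k+b$ modulo $1$ are $\delta$-separated with $\delta\ge c_q/L'$. Indeed, for two distinct points, $\|\theta(k-k')\|\ge c_q/|k-k'|\ge c_q/L'$; the shift $b$ plays no role.

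I would cut the range into sub-blocks of length $L_0=\lceil M\rceil$; there are at most $O(1+\eta)$ of them. On each sub-block the points $x_k=\theta k+b$ are $(c_q/L_0)$-separated on the circle, with at most $L_0$ points. Ordering them by distance to $0$ on the circle, the $j$-th smallest distance is at least about $(j-1)c_q/(2L_0)$, since at most two points fit in each arc of that length on either side. Hence the sub-block sum is bounded by
\[
\sum_{j\ge0}\Big(1+M\cdot\tfrac{j c_q}{2L_0}\Big)^{-2}
\le 2\sum_{j\ge0}\big(1+j c_q/4\big)^{-2}=O_q(1),
\]
using $M/L_0\ge 1/2$ for $M\ge1$. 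Summing over the $O(1+\eta)$ sub-blocks gives $C_q(1+\eta)$. This also covers $\eta M<1$, where the block has at most one sub-block.

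The main obstacle is justifying the Diophantine bound for $\theta=q\sqrt2$ uniformly with respect to the rational multiplier. To handle it, I would write $q=a/b$ with $b>0$. Then $\|\theta k\|\ge \frac1b\|a\sqrt2 k\|\cdot$ (after clearing the denominator, up to constants). Since $a\sqrt2$ satisfies $|(a\sqrt2k)^2-n^2|=|2a^2k^2-n^2|\ge1$, this gives $\|a\sqrt2 k\|\ge 1/(3|a||k|)$. The remaining steps are routine counting.
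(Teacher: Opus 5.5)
Your proof is correct and follows essentially the same route as the paper. Both use the norm-form bound $|2a^2k^2-b^2\ell^2|\ge1$ to get $\|q\sqrt2\,k\|_{\R/\Z}\ge c_q/|k|$, hence separation of the points $q\sqrt2\,k+b$ modulo $1$, followed by counting points in arcs of length about $1/M$. The only difference is organizational. You split the range into $O(1+\eta)$ blocks of length $\lceil M\rceil$, each with separation $\gtrsim c_q/M$. The paper works over the whole range with separation $c_q/(2\eta M)$ and bounds each shell $j/M\le\|\cdot\|_{\R/\Z}<(j+1)/M$ by $2(1+2\eta/c_q)$ points. Both give $C_q(1+\eta)$.
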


\begin{proof}
	Write $q=\frac{a}{b_0}$, where $a\in\Z\setminus\{0\}$ and
	$b_0\in\N$. We first claim that there exists $c_q>0$ such that
	$\|q\sqrt2\,n\|_{\R/\Z}\ge \frac{c_q}{|n|}$ for every
	$n\in\Z\setminus\{0\}$.
	In fact, choose $\ell\in\Z$ such that
	$|q\sqrt2\,n-\ell|=\|q\sqrt2\,n\|_{\R/\Z}$.
	Since $2a^2n^2-b_0^2\ell^2$ is a nonzero integer, we have
	\begin{equation}
		1\le|2a^2n^2-b_0^2\ell^2|
		=|a\sqrt2\,n-b_0\ell|\,|a\sqrt2\,n+b_0\ell|
		=b_0\|q\sqrt2\,n\|_{\R/\Z}\,|a\sqrt2\,n+b_0\ell|.
		\label{eq:separation1}
	\end{equation}
	Moreover, since $|\ell|\le|q|\sqrt2\,|n|+\frac12$,
	\begin{equation}
		|a\sqrt2\,n+b_0\ell|
		\le2|a|\sqrt2\,|n|+\frac{b_0}{2}
		\le\left(2|a|\sqrt2+\frac{b_0}{2}\right)|n|.
		\label{eq:separation2}
	\end{equation}
	Therefore, by \eqref{eq:separation1} and \eqref{eq:separation2},
	\[
	\|q\sqrt2\,n\|_{\R/\Z}
	\ge\frac{1}{b_0\big(2|a|\sqrt2+\frac{b_0}{2}\big)|n|}.
	\]
	The claim holds by taking
	$c_q=\frac{1}{b_0\left(2|a|\sqrt2+\frac{b_0}{2}\right)}$.
	
	If $\eta=0$, the sum contains at most one term and is at most $1$.
	Suppose that $\eta>0$.
	For each integer $j\ge0$, set
	\[
	I_j^+=\left[\frac jM,\frac{j+1}{M}\right)+\Z,
	\qquad
	I_j^-=\left(-\frac{j+1}{M},-\frac jM\right]+\Z.
	\]
	Then
	\[
	\begin{aligned}
		A_j
		&=\left\{k\in\Z:|k-c|\le\eta M,\
		\frac jM\le\|q\sqrt2\,k+b\|_{\R/\Z}<\frac{j+1}{M}\right\}\\
		&\subset
		\bigcup_{\varepsilon\in\{+,-\}}
		\left\{k\in\Z:|k-c|\le\eta M,\
		q\sqrt2\,k+b+\Z\in I_j^\varepsilon\right\}.
	\end{aligned}
	\]
	Each arc $I_j^\pm$ has length at most $M^{-1}$.
	For $\varepsilon\in\{+,-\}$, set
	\[
	r_j^\varepsilon
	=\#\{k\in\Z:|k-c|\le\eta M,\
	q\sqrt2\,k+b+\Z\in I_j^\varepsilon\}.
	\]
	If $r_j^\varepsilon\ge2$, then, by the above claim, any two
	distinct integers counted by $r_j^\varepsilon$ satisfy
	\[
	\|q\sqrt2\,k-q\sqrt2\,k'\|_{\R/\Z}
	\ge\frac{c_q}{|k-k'|}
	\ge\frac{c_q}{2\eta M}.
	\]
	Thus
	\[
	\frac{(r_j^\varepsilon-1)c_q}{2\eta M}
	\le |I_j^\varepsilon|\le\frac1M.
	\]
	Thus $r_j^\varepsilon\le1+\frac{2\eta}{c_q}$, which also holds
	when $r_j^\varepsilon\le1$. 
	Therefore,
	\(
	\#A_j\le r_j^++r_j^-
	\le2\left(1+\frac{2\eta}{c_q}\right).
	\)
	Consequently,
	\[
	\begin{aligned}
		\sum\limits_{\substack{k\in\Z\\|k-c|\le \eta M}}
		\big(1+M\|q\sqrt2\,k+b\|_{\R/\Z}\big)^{-2}
		&=\sum\limits_{j=0}^\infty\sum\limits_{k\in A_j}
		\big(1+M\|q\sqrt2\,k+b\|_{\R/\Z}\big)^{-2}\\
		&\le\sum\limits_{j=0}^\infty\frac{\#A_j}{(1+j)^2}\\
		&\le2\left(1+\frac{2\eta}{c_q}\right)
		\sum\limits_{j=0}^\infty\frac{1}{(1+j)^2}\\
		&\le2\left(1+\frac{2}{c_q}\right)(1+\eta)
		\sum\limits_{j=0}^\infty\frac{1}{(1+j)^2}.
	\end{aligned}
	\]
	Taking $C_q=2\left(1+\frac{2}{c_q}\right)
	\sum\limits_{j=0}^\infty\frac{1}{(1+j)^2}$
	gives the desired conclusion.
\end{proof}

Let $r,s\in\N$ and write $\mathbf 1_r=(1,\dots,1)^T\in\R^r$. Set $A_{r,s}=I_r+s^{-1}\mathbf 1_r\mathbf 1_r^T$, where $I_r$ is the $r\times r$ identity matrix and \textit{The superscript $T$ denotes transpose.}

\begin{lemma}\label{lem:lattice-intro}
For every $R>0$ there exists $C_R>0$, depending only on $R$, such that, for all
$r,s\in\N$, $M=(M_1,\dots,M_r)$ with $M_i\ge1$, and
$b=(b_1,\dots,b_r)\in\R^r$,
\[
\sum\limits_{\substack{h\in\Z^r\\|h_i|\le RM_i}}
\prod\limits_{i=1}^r
\bigl(1+M_i\|2\sqrt2(A_{r,s}h)_i+b_i\|_{\R/\Z}\bigr)^{-2}
\le s C_R^r.
\]
\end{lemma}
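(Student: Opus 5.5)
The plan is to reduce the coupled $r$-dimensional sum to a product of one-dimensional sums of the type in Lemma~\ref{lem:1d-intro} with $q=2$. The coupling comes entirely from the rank-one perturbation: $(A_{r,s}h)_i=h_i+S/s$ with $S=\sum_{j=1}^r h_j$. The factor $s$ in the bound should come from the $s$ possible residues of $S$ modulo $s$. For each $h$, write $S=st+u$ with $t\in\Z$ and $u\in\{0,1,\dots,s-1\}$. Then
\[
2\sqrt2(A_{r,s}h)_i+b_i=2\sqrt2\,(h_i+t)+\Bigl(b_i+\tfrac{2\sqrt2\,u}{s}\Bigr),
\]
and the shift in the bracket is independent of $h$ once $u$ is fixed. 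So I would split the sum over the $s$ residue classes. It then suffices to prove, uniformly in $s$, $u$ and the shifts, that each class contributes at most $C_R^r$.

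Fix $u$. I would parametrize the class by $t$ and $g_i=h_i+t$ for $2\le i\le r$. Then $h_1$ is determined by
\[
h_1=(s+r-1)t+u-\sum_{j\ge2}g_j,
\]
and this map is injective. The factors with $i\ge2$ depend only on $g_i$. The first factor depends on $t$ through the phase
\[
2\sqrt2\,(h_1+t)=2\sqrt2\,(s+r)t+\text{(terms independent of }t).
\]
For fixed $g$, the constraint $|h_1|\le RM_1$ confines $t$ to an interval of length at most $2RM_1/(s+r-1)+1$. I would redo the counting argument of Lemma~\ref{lem:1d-intro} for the points $2\sqrt2(s+r)t$ modulo $1$. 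Two such points with $t-t'=n$ are separated by at least $c/((s+r)|n|)$, which is $\gtrsim c/(RM_1)$ on this short interval. Hence each arc of length $M_1^{-1}$ contains $O(1+R)$ of them, and the inner sum over $t$ is $\le C(1+R)$ uniformly in $s$. This step is exactly where the large modulus $s+r$ is harmless: the shortening of the $t$-range by the factor $1/(s+r)$ cancels the loss in Diophantine separation.

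The remaining step is the sum over $g_2,\dots,g_r$ of $\prod_{i\ge2}\bigl(1+M_i\|2\sqrt2 g_i+b_i'\|_{\R/\Z}\bigr)^{-2}$. Here I want to apply Lemma~\ref{lem:1d-intro} coordinatewise, which gives $(C(1+R))^{r-1}$. I expect the main obstacle to be the ranges. The original constraints $|h_i|\le RM_i$ become $|g_i-t|\le RM_i$, which depend on $t$. Once the $t$-sum is performed first, the $g_i$ are no longer confined to intervals of length $O(M_i)$.

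My first attempt would be to interchange the roles. Fix $t$ in an outer sum, so that each $g_i$ ranges over an interval of length $2RM_i$ centred at $t$ and Lemma~\ref{lem:1d-intro} applies directly. I would then control the number of effective values of $t$ by keeping the first factor and summing it over $t$ with the same separation argument. Concretely, I would bound the sum over $t$ of the first factor by a supremum over $g$. To make this rigorous I would dyadically decompose according to the size of $M_1\|2\sqrt2(h_1+t)+b_1'\|$, say $\approx 2^k$. On each piece I would count the pairs ($t$, admissible $g$) by the arc-counting bound, which is $O((1+R)2^k)$ values of $t$ per piece. That count is then multiplied by the $t$-uniform product bound $(C(1+R))^{r-1}$ and by the weight $2^{-2k}$, and summing over $k$ converges. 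If this bookkeeping fails, the fallback is to choose the eliminated index $1$ to be the one with the largest $M_i$. Then the $t$-interval, of length $\lesssim RM_1/s$, dominates every $RM_i$, and the $g_i$-ranges can be enlarged to intervals of length $O(RM_1)$ around a fixed centre, at the cost of an extra factor $(1+R)$ per coordinate absorbed into $C_R^r$.
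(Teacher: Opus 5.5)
Your set-up is correct. This includes $(A_{r,s}h)_i=h_i+S/s$ and the split $S=st+u$ into $s$ residue classes; the paper's factor $s$ arises the same way, from $K_1=v+su$. The injective parametrization by $(t,g_2,\dots,g_r)$ is also right, as is the bound $C(1+R)$ for the $t$-sum of the first factor at fixed $g$, via $\|2\sqrt2(s+r)n\|_{\R/\Z}\gtrsim 1/((s+r)|n|)$.

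However, the step you flag as the main obstacle is a genuine gap, and neither remedy closes it.

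\textbf{The dyadic version.} You combine two bounds with opposite quantifier order. Arc-counting gives $O((1+R)2^k)$ values of $t$ in the $k$-th piece \emph{for each fixed} $g$. The product bound $(C(1+R))^{r-1}$ controls the $g$-sum \emph{for each fixed} $t$. To multiply them you would need only $O(2^k)$ values of $t$ to admit some admissible $g$ in the $k$-th piece. Otherwise you are back to summing the $g$-weights over the union of all the boxes, which is the original problem. That requirement fails. When, e.g., $M_2\asymp M_1$, for most admissible $t$ the sum $\sum_{j\ge2}g_j$ runs over an interval of length $\asymp RM_1$. Then the phase $2\sqrt2((s+r)t+u-\sum_{j\ge2}g_j)+b_1'$ comes within $O(1/(RM_1))$ of every target. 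So essentially every one of the up to $\approx 2R\sum_jM_j/s$ admissible $t$ has a slice in a piece with $2^k=O(1+R^{-1})$.

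\textbf{The fallback.} This fails quantitatively. Lemma~\ref{lem:1d-intro} charges $C(1+L/M_i)$ for a scale-$M_i$ weight summed over an interval of length $L$. This order is actually attained, because $2\sqrt2\,g$ is $O(1/m)$-dense modulo $1$ over any $m$ consecutive integers $g$. So enlarging the $g_i$-range to length $RM_1$ costs $C(1+RM_1/M_i)$, not $C(1+R)$, and this is unbounded as $M_1/M_i\to\infty$. Moreover, the length that matters is not the fixed-$g$ interval of length $\approx 2RM_1/(s+r-1)$. It is the full range $|t|\lesssim R\sum_jM_j/s$ of the centre $t=\lfloor S/s\rfloor$. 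So even with all $M_i$ equal you lose a factor $\gtrsim Rr/s$ per coordinate, which is not of the form $C_R^r$ when $s$ is small.

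\textbf{What is missing.} You need a scale-ordered, nested change of variables in place of one shared shift $t$. The paper orders $M_1\ge\dots\ge M_r$ and sets $K_j=(s+j)h_j+\sum_{i>j}h_i$. Three properties follow:
\begin{itemize}
\item $|K_j|\le R(s+r)M_j$, since $K_j$ involves only coordinates of scale $\le M_j$;
\item $K_j\equiv K_{j-1}\pmod{s+j-1}$;
\item $(A_{r,s}h)_j=K_j/(s+j-1)+\sum_{\ell<j}K_\ell/((s+\ell-1)(s+\ell))$, which depends only on $K_1,\dots,K_j$.
\end{itemize}
Sum $K_r$ first, then $K_{r-1}$, and so on. Each inner sum then runs over the progression $K_{j-1}+(s+j-1)\Z$ in an interval of length $O(R(s+r)M_j)$. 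Its phase is $2\sqrt2$ times the progression index plus a shift fixed by the outer variables. Lemma~\ref{lem:1d-intro} therefore gives $C_2(1+R(s+r)/(s+j-1))$ at each level. The residue of $K_1$ mod $s$ supplies the factor $s$, and $\prod_j(1+R(s+r)/(s+j-1))\le[(1+2R)e]^r$.

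In your coordinates, $K_1=sg_1+u$ and $K_j-K_{j-1}=(s+j-1)(g_j-g_{j-1})$. So the paper's argument amounts to summing $g_1,\dots,g_r$ successively, with each $g_j$ confined by its predecessors to an interval of length $2R(s+r)M_j/(s+j-1)$. A single common $t$ cannot provide this.
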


\begin{proof}
	Since $A_{r,s}$ commutes with coordinate permutations, we assume that $M_1\ge M_2\ge\cdots\ge M_r$.
	Set $t_j=s+j-1$ and
	\begin{equation}\label{quant:Kdef}
		K_j=K_j(h)=(t_j+1)h_j+\sum\limits_{i=j+1}^r h_i,
		\ 1\le j\le r.
	\end{equation}
	By \eqref{quant:Kdef}, we have
	\begin{equation}\label{quant:Kcongruence}
		K_j-K_{j-1}=t_jh_j-(t_{j-1}+1)h_{j-1} = t_j(h_j-h_{j-1}),
		\ 2\le j\le r.
	\end{equation}
	Since $|h_i|\le RM_i$ and $M_i\le M_j$ for $i>j$,
	\begin{equation}\label{quant:Krange}
		|K_j|
		\le R\left((s+j)M_j+\sum\limits_{i=j+1}^rM_i\right)
		\le R(s+r)M_j.
	\end{equation}
	Therefore, by \eqref{quant:Kcongruence} and \eqref{quant:Krange},
	\begin{equation}\label{subset}
		\begin{aligned}
			&\left\{K(h)=(K_1(h),\ldots, K_r(h)):h\in\Z^r,\ |h_i|\le RM_i,\ 1\le i\le r\right\}\\
			\subset&
			\left\{K\in\Z^r:|K_j|\le R(s+r)M_j,\ 1\le j\le r,\
			K_j\equiv K_{j-1}\pmod{t_j},\ 2\le j\le r\right\}.
		\end{aligned}
	\end{equation}
	\noindent\textbf{Case 1:} $j\ge2$.
	Fix $K_1,\ldots,K_{j-1}$.
	By \eqref{quant:Kcongruence}, we write $K_j=K_{j-1}+t_ju$ with $u\in\Z$.
	By \eqref{quant:Kdef}, we have
	\begin{equation*}
		\begin{aligned}
			(A_{r,s}h)_j
			&=h_j+\frac1s\sum_{i=1}^r h_i\\
			&=h_j+\frac1{t_j}\sum_{i=j}^r h_i
			+\sum_{\ell=1}^{j-1}
			\left(\frac1{t_\ell}\sum_{i=\ell}^r h_i
			-\frac1{t_{\ell}+1}\sum_{i=\ell+1}^r h_i\right)\\
			&=\frac{K_j}{t_j}
			+\sum_{\ell=1}^{j-1}\frac{K_\ell}{t_\ell(t_\ell+1)}.
		\end{aligned}
	\end{equation*}
	Therefore,
	\begin{align}\label{app1}
	2\sqrt2(A_{r,s}h)_j+b_j=2\sqrt2\,u+\frac{2\sqrt2\,K_{j-1}}{t_j}
	+2\sqrt2\sum\limits_{\ell=1}^{j-1}
	\frac{K_\ell}{t_\ell(t_\ell+1)}+b_j.
	\end{align}
	Moreover, \eqref{quant:Krange} gives
	\begin{align}\label{app2}
	\left|u+\frac{K_{j-1}}{t_j}\right|
	\le\frac{R(s+r)}{t_j}M_j.
	\end{align}
	Write
	\[
	f_j(K_1,\ldots,K_j)
	=
	\left(
	1+M_j\left\|
	2\sqrt2\left(
	\frac{K_j}{t_j}
	+\sum_{\ell=1}^{j-1}\frac{K_\ell}{t_\ell(t_\ell+1)}
	\right)+b_j
	\right\|_{\mathbb R/\mathbb Z}
	\right)^{-2}.
	\]
	By \eqref{app1}, \eqref{app2} and taking $M=M_j$, $\eta=\frac{R(s+r)}{t_j}$,
	$c=-\frac{K_{j-1}}{t_j}$ and
	$b=\frac{2\sqrt2\,K_{j-1}}{t_j}
	+2\sqrt2\sum\limits_{\ell=1}^{j-1}
	\frac{K_\ell}{t_\ell(t_\ell+1)}+b_j$
	in Lemma~\ref{lem:1d-intro}, we obtain
	\begin{align}\label{sum1}
	\begin{aligned}
		\sum_{\substack{K_j\in\Z\\
				|K_j|\le R(s+r)M_j\\
				K_j\equiv K_{j-1}\pmod{t_j}}}
		f_j(K_1,\ldots,K_j)
		&=
		\sum_{\substack{u\in\Z\\
				\left|u+\frac{K_{j-1}}{t_j}\right|\le \frac{R(s+r)}{t_j}M_j}}
		\bigl(1+M_j\|2\sqrt2u +b\|_{\R/\Z}\bigr)^{-2}\\
		&\le C_{2}
		\left(1+\frac{R(s+r)}{t_j}\right).
	\end{aligned}
	\end{align}
	\medskip
	\noindent\textbf{Case 2:} $j=1$.
	Write $K_1=v+su$, where
	$v\in\{0,\ldots,s-1\}$ and $u\in\Z$.
	Since $t_1=s$, we have
	\[
	(A_{r,s}h)_1=\frac{K_1}{s}
	=u+\frac{v}{s},
	\qquad
	\left|u+\frac{v}{s}\right|
	\le\frac{R(s+r)}{s}M_1.
	\]
	Write
	\[
	f_1(K_1)
	=
	\left(
	1+M_1\left\|
	2\sqrt2\,\frac{K_1}{s}+b_1
	\right\|_{\mathbb R/\mathbb Z}
	\right)^{-2}.
	\]
	Therefore, for each $v\in\{0,\ldots,s-1\}$, taking $M=M_1$, $\eta=\frac{R(s+r)}{s}$,
	$c=-\frac{v}{s}$ and $b=\frac{2\sqrt2\,v}{s}+b_1$
	in Lemma~\ref{lem:1d-intro}, we obtain
	\begin{align}\label{sum2}
	\begin{aligned}
		\sum_{\substack{K_1\in\Z\\|K_1|\le R(s+r)M_1}}
		f_{1}(K_1)
		&=
		\sum_{v=0}^{s-1}
		\sum_{\substack{u\in\Z\\
				\left|u+\frac{v}{s}\right|\le\frac{R(s+r)}{s}M_1}}
		\bigl(1+M_1\|2\sqrt2\,u+b\|_{\R/\Z}\bigr)^{-2}\\
		&\le sC_{2}
		\left(1+\frac{R(s+r)}{s}\right).
	\end{aligned}
	\end{align}
	Therefore, by  \eqref{sum1} and \eqref{sum2},
	we obtain
	\[
	\begin{aligned}
		&\sum_{\substack{h\in\Z^r\\|h_i|\le RM_i}}
		\prod_{i=1}^r
		\bigl(1+M_i\|2\sqrt2(A_{r,s}h)_i+b_i\|_{\R/\Z}\bigr)^{-2}\\
		\overset{(*)}{\le}&
		\sum_{\substack{K_1\in\Z\\|K_1|\le R(s+r)M_1}}
		f_1(K_1)
		\sum_{\substack{K_2\in\Z\\|K_2|\le R(s+r)M_2\\
				K_2\equiv K_1\pmod{t_2}}}
		f_2(K_1,K_2)\cdots
		\sum_{\substack{K_r\in\Z\\|K_r|\le R(s+r)M_r\\
				K_r\equiv K_{r-1}\pmod{t_r}}}
		f_r(K_1,\ldots,K_r)\\
		\le&
		\left[\prod_{j=2}^r
		C_{2}\left(1+\frac{R(s+r)}{t_j}\right)\right]
		\sum_{\substack{K_1\in\Z\\|K_1|\le R(s+r)M_1}}
		f_1(K_1)\\
		\le&
		sC_{2}^r
		\prod_{j=1}^r
		\left(1+\frac{R(s+r)}{s+j-1}\right)\\
		\overset{(**)}{\leq}& sC_{2}^r [e(1+2R)]^r,
	\end{aligned}
	\]
	where \((*)\) follows from \eqref{subset} and the fact that $h\mapsto K(h)$ is injective, and
	\((**)\) follows from
	\[
	\prod_{j=1}^r
	\left(1+\frac{R(s+r)}{s+j-1}\right)
	\le\prod_{j=1}^r\left(1+\frac{R(r+1)}{j}\right)
	\le\prod_{j=1}^r\frac{j+2Rr}{j}
	\le(1+2R)^r\frac{r^r}{r!}
	\le[(1+2R)e]^r.
	\]
	Taking $C_R=(1+2R)eC_{2}$ gives the desired conclusion.

\end{proof}

For $x\in\R^r$ and $M\in(0,\infty)^r$, write $\frac{x}{M}=(\frac{x_1}{M_1},\dots,\frac{x_r}{M_r})$ and $Mx=(M_1x_1,\ldots,M_rx_r)$. For a multi-index $\nu\in\N_0^r$ write $\partial^\nu=\partial_1^{\nu_1}\cdots\partial_r^{\nu_r}$. We use the Fourier transform
\[
\widehat f(\xi)=\int_{\R^r}f(x)\e^{-2\pi ix\cdot\xi}\,dx.
\]

\begin{lemma}\label{lem:compact-intro}
Let $r\ge1$, $R>0$ and $a\in C_c^\infty(\R^r)$ with $\supp a\subset[-R,R]^r$. For $c\in\R^r$, $M\in(0,\infty)^r$ and $h\in\Z^r$, define
\[
b_h(x)=a\Big(\frac{x+h-c}{M}\Big)\overline{a\Big(\frac{x-c}{M}\Big)}.
\]
Then there exists a constant $D_R\ge1$, depending only on $R$, such that
\[
\big|\widehat{b_h}(\xi)\big|\le D_R^r\mathcal N_r(a)^2\left(\prod\limits_{i=1}^rM_i\right)\prod\limits_{i=1}^r(1+M_i|\xi_i|)^{-2},\ \xi\in\R^r,
\]
where $\mathcal N_r(a)=\max\limits_{\nu\in\{0,1,2\}^r}\|\partial^\nu a\|_\infty$.
\end{lemma}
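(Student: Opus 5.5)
The plan is to reduce to a tensor-product-type estimate by a change of variables and then integrate by parts twice in each coordinate separately. First substitute $x=c+My$, i.e. $x_i=c_i+M_iy_i$. This gives
\[
\widehat{b_h}(\xi)=\Big(\prod_{i=1}^rM_i\Big)\e^{-2\pi i c\cdot\xi}\int_{\R^r}a\Big(y+\frac hM\Big)\overline{a(y)}\,\e^{-2\pi i y\cdot (M\xi)}\,dy .
\]
This produces the factor $\prod_i M_i$ and reduces the claim to bounding $|\widehat{g}(\zeta)|$ with $\zeta=M\xi$, where $g(y)=a(y+\tau)\overline{a(y)}$ and $\tau=h/M\in\R^r$ is arbitrary. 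The bound must be uniform in $\tau$. Concretely, the goal is
\[
|\widehat g(\zeta)|\le D_R^r\,\mathcal N_r(a)^2\prod_{i=1}^r(1+|\zeta_i|)^{-2}.
\]

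To prove this, fix $\zeta$ and split the coordinates into $S=\{i:|\zeta_i|\ge1\}$ and its complement. Set $\nu=2\mathbf 1_S\in\{0,2\}^r$. Since $g\in C_c^\infty$, integrating by parts twice in each variable $y_i$ with $i\in S$ gives
\[
(2\pi i)^{|\nu|}\Big(\prod_{i\in S}\zeta_i^2\Big)\widehat g(\zeta)=\widehat{\partial^\nu g}(\zeta),
\]
so $|\widehat g(\zeta)|\le \prod_{i\in S}(2\pi|\zeta_i|)^{-2}\,\|\partial^\nu g\|_{L^1}$. The Leibniz rule applied coordinatewise gives
\[
\partial^\nu g=\sum_{\mu\le\nu}\binom{\nu}{\mu}(\partial^\mu a)(\cdot+\tau)\,\overline{\partial^{\nu-\mu}a}.
\]
Here $\mu,\nu-\mu\in\{0,1,2\}^r$, and the binomial weights sum to $\prod_{i\in S}4\le 4^r$. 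Each term is bounded pointwise by $\mathcal N_r(a)^2$. Each term is also supported in $\supp a\subset[-R,R]^r$, which has volume $(2R)^r$; this is where uniformity in $\tau$ comes from, since only the support of the unshifted factor is used. Therefore $\|\partial^\nu g\|_{L^1}\le (8R)^r\mathcal N_r(a)^2$.

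It remains to compare with $(1+|\zeta_i|)^{-2}$. For $i\in S$, $(2\pi|\zeta_i|)^{-2}\le (1+|\zeta_i|)^{-2}$ because $|\zeta_i|\ge1$ implies $1+|\zeta_i|\le 2|\zeta_i|\le 2\pi|\zeta_i|$. For $i\notin S$, $1\le 4(1+|\zeta_i|)^{-2}$. Combining these gives the claim with $D_R=\max(1,32R)$, which depends only on $R$.

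The only delicate point is the bookkeeping that makes the constant exactly of the form $D_R^r$ with $D_R$ independent of $r$, $M$, $c$, $h$ and $a$. This is why the derivatives are taken only in the coordinates with $|\zeta_i|\ge1$, with at most two derivatives per coordinate so that $\mathcal N_r(a)$ suffices. It is also why the $L^1$ norm is controlled by the support of the unshifted factor alone rather than by any property of the shift $h/M$.
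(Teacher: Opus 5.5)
Your proposal is correct and follows essentially the same route as the paper: integrate by parts twice in each coordinate, bound the resulting derivatives via Leibniz's rule by $\mathcal N_r(a)^2$, and control the $L^1$ norm using only the support of the unshifted factor. The differences are cosmetic. The paper works in the original variables with the single operator $D_M=\prod_{i=1}^r(1-M_i^2\partial_i^2)$, whose multiplier $\prod_{i=1}^r(1+4\pi^2M_i^2\xi_i^2)$ treats all frequencies at once and avoids your case split. You instead rescale first and differentiate only in the coordinates with $|\zeta_i|\ge1$.
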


\begin{proof}
For every $\nu\in\{0,1,2\}^r$, we have
\[
\partial^\nu b_h(x)=\left(\prod\limits_{i=1}^rM_i^{-\nu_i}\right)\sum\limits_{\alpha\le\nu}\binom{\nu}{\alpha}(\partial^\alpha a)\Big(\frac{x+h-c}{M}\Big)\overline{(\partial^{\nu-\alpha}a)\Big(\frac{x-c}{M}\Big)},
\]
where $\alpha\le\nu$ means $\alpha_i\le\nu_i$ for all $i$ and $\binom{\nu}{\alpha}=\prod\limits_{i=1}^r\binom{\nu_i}{\alpha_i}$. Since both $\alpha$ and $\nu-\alpha$ belong to $\{0,1,2\}^r$, we have
\begin{equation}
\begin{aligned}
\|\partial^\nu b_h\|_\infty
\le\left(\prod\limits_{i=1}^rM_i^{-\nu_i}\right)
\sum\limits_{\alpha\le\nu}\binom{\nu}{\alpha}
\|\partial^\alpha a\|_\infty\|\partial^{\nu-\alpha}a\|_\infty
\le2^{|\nu|}\mathcal N_r(a)^2\prod\limits_{i=1}^rM_i^{-\nu_i}
\le4^r\mathcal N_r(a)^2\prod\limits_{i=1}^rM_i^{-\nu_i}.
\end{aligned}\label{eq:derivativebound}
\end{equation}
Let $D_M=\prod\limits_{i=1}^r(1-M_i^2\partial_i^2)$.
Then, by \eqref{eq:derivativebound},
\begin{align}\label{2.20}
\|D_Mb_h\|_\infty
=
\left\|\sum\limits_{\epsilon\in\{0,1\}^r}(-1)^{|\epsilon|}\left(\prod\limits_{i=1}^rM_i^{2\epsilon_i}\right)\partial^{2\epsilon}b_h\right\|_\infty
\le
\sum\limits_{\epsilon\in\{0,1\}^r}\left(\prod\limits_{i=1}^rM_i^{2\epsilon_i}\right)\|\partial^{2\epsilon}b_h\|_\infty\le8^r\mathcal N_r(a)^2.
\end{align}
Since $\supp a\subset[-R,R]^r$, we have
\[
\supp(D_Mb_h)\subset\supp b_h\subset\prod\limits_{i=1}^r[c_i-RM_i,c_i+RM_i].
\]
It follows from \eqref{2.20} that
\begin{equation}
\|D_Mb_h\|_{L^1(\R^r)}\le(2R)^r\left(\prod\limits_{i=1}^rM_i\right)\|D_Mb_h\|_\infty\le 16^r R^r\mathcal N_r(a)^2\prod\limits_{i=1}^rM_i.\label{eq:L1bound}
\end{equation}
Writing $\mathrm e(t)=e^{2\pi it}$ for $t\in\mathbb R$ and applying integration by parts in each coordinate, we obtain
\begin{equation}
\widehat{D_Mb_h}(\xi)=\int_{\R^r}b_h(x)D_M(\e(-x\cdot\xi))\,dx=\left(\prod\limits_{i=1}^r(1+4\pi^2M_i^2\xi_i^2)\right)\widehat{b_h}(\xi).\label{eq:FourierDM}
\end{equation}
Therefore, by \eqref{eq:FourierDM}, \eqref{eq:L1bound} and the fact that $(1+y)^2\le2(1+y^2)\le2(1+4\pi^2y^2)$ for $y\ge0$, we have
\begin{equation*}
\begin{aligned}
|\widehat{b_h}(\xi)|&=|\widehat{D_Mb_h}(\xi)|\prod\limits_{i=1}^r(1+4\pi^2M_i^2\xi_i^2)^{-1}\\
&\le\|D_Mb_h\|_{L^1(\R^r)}\prod\limits_{i=1}^r(1+4\pi^2M_i^2\xi_i^2)^{-1}\\
&\le 16^r R^r\mathcal N_r(a)^2\left(\prod\limits_{i=1}^rM_i\right)\prod\limits_{i=1}^r(1+4\pi^2M_i^2\xi_i^2)^{-1}\\
&\le 32^r R^r\mathcal N_r(a)^2\left(\prod\limits_{i=1}^rM_i\right)\prod\limits_{i=1}^r(1+M_i|\xi_i|)^{-2}.
\end{aligned}
\end{equation*}
Thus $D_R=32(1+R)$ is admissible.
\end{proof}

\begin{lemma}\label{lem:expsum-intro}
Let $R$, $a$ and $\mathcal N_r(a)$ be as in Lemma \ref{lem:compact-intro}. Then there exists a constant $A_R\ge1$, depending only on $R$, such that, for every $s\in\mathbb N$, every $M=(M_1,\dots,M_r)$ with $M_i\ge1$ and every $c,\ell\in\R^r$,
\[
\Big|\sum\limits_{n\in\Z^r}a\Big(\frac{n-c}{M}\Big)\e\big(\sqrt2\,n^TA_{r,s}n+\ell\cdot n\big)\Big|\le\sqrt{s}A_R^r \mathcal N_r(a)\prod\limits_{i=1}^rM_i^{\frac 12},
\]
where $\ell\cdot n=\sum\limits_{i=1}^r\ell_in_i$.
\end{lemma}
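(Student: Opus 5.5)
We will use the standard Weyl differencing (van der Corput) argument, squaring the sum and reducing to the lattice count of Lemma~\ref{lem:lattice-intro} via Poisson summation and Lemma~\ref{lem:compact-intro}. Write
\[
S=\sum_{n\in\Z^r}a\Big(\frac{n-c}{M}\Big)\e\big(\phi(n)\big),\qquad \phi(n)=\sqrt2\,n^TA_{r,s}n+\ell\cdot n .
\]
Expanding $|S|^2$ and substituting $n=m+h$ gives
\[
|S|^2=\sum_{h\in\Z^r}\sum_{m\in\Z^r}b_h(m)\,\e\big(\phi(m+h)-\phi(m)\big),
\]
where $b_h$ is exactly the function of Lemma~\ref{lem:compact-intro}. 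Since $A_{r,s}$ is symmetric,
\[
\phi(m+h)-\phi(m)=2\sqrt2\,(A_{r,s}h)\cdot m+\sqrt2\,h^TA_{r,s}h+\ell\cdot h .
\]
The last two terms do not depend on $m$, so they contribute only a unimodular factor. Moreover $b_h\equiv0$ unless $|h_i|\le 2RM_i$ for every $i$, because the supports of the two factors must overlap. Hence
\[
|S|^2\le\sum_{|h_i|\le 2RM_i}\Big|\sum_{m\in\Z^r}b_h(m)\,\e\big(2\sqrt2 A_{r,s}h\cdot m\big)\Big|.
\]

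Next we apply Poisson summation to the inner sum. This is legitimate because $b_h\in C_c^\infty$. It gives
\[
\sum_{m}b_h(m)\,\e(\theta\cdot m)=\sum_{k\in\Z^r}\widehat{b_h}(k-\theta),\qquad \theta=2\sqrt2A_{r,s}h .
\]
By Lemma~\ref{lem:compact-intro},
\[
|\widehat{b_h}(k-\theta)|\le D_R^r\mathcal N_r(a)^2\prod_i M_i\prod_i(1+M_i|k_i-\theta_i|)^{-2}.
\]
Summing over $k$ factorises coordinatewise. For each $i$,
\[
\sum_{k_i\in\Z}(1+M_i|k_i-\theta_i|)^{-2}\le 3\big(1+M_i\|\theta_i\|_{\R/\Z}\big)^{-2},
\]
up to an absolute constant. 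The justification is that one $k_i$ realises the nearest integer, and every other $k_i$ satisfies $|k_i-\theta_i|\ge 1/2$, so those terms sum to $O(M_i^{-2})$. Since $M_i\ge1$, this is $O\big((1+M_i\|\theta_i\|)^{-2}\big)$. Therefore
\[
|S|^2\le (CD_R)^r\mathcal N_r(a)^2\prod_iM_i\sum_{|h_i|\le2RM_i}\prod_i\big(1+M_i\|2\sqrt2(A_{r,s}h)_i\|_{\R/\Z}\big)^{-2}.
\]

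Finally we apply Lemma~\ref{lem:lattice-intro} with $2R$ in place of $R$ and $b=0$. It bounds the last sum by $sC_{2R}^r$, so
\[
|S|^2\le s\,(CD_RC_{2R})^r\,\mathcal N_r(a)^2\prod_iM_i .
\]
Taking square roots gives the claim with $A_R=(CD_RC_{2R})^{1/2}$, increased to be at least $1$ if necessary.

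The main point to check is that the constants stay exponential in $r$ and do not pick up $r$-dependent factors beyond the $s$. The Poisson step and the factorised sum over $k$ are each clean per coordinate. The only place $s$ enters is Lemma~\ref{lem:lattice-intro}, which already provides the factor $s$, so this should go through. We must also confirm the symmetry of $A_{r,s}$ used in the differencing, which holds since $A_{r,s}=I_r+s^{-1}\mathbf 1_r\mathbf 1_r^T$, and the convergence of the Poisson sum, which follows from the decay in Lemma~\ref{lem:compact-intro}.
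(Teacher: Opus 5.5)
Your proposal is correct and follows the paper's proof essentially step for step: Weyl differencing with $b_h$, restriction to $|h_i|\le 2RM_i$, Poisson summation combined with Lemma~\ref{lem:compact-intro}, the periodization bound, and then Lemma~\ref{lem:lattice-intro} with $R$ replaced by $2R$ and $b=0$. The one slip is the explicit constant $3$ in the periodization step, which fails as written: at $M_i=1$, $\theta_i=\frac12$ the left side is $\pi^2-8>\frac43$. Your argument (nearest term plus an $O(M_i^{-2})$ tail) still gives an absolute constant, and that is all the proof needs.
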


\begin{proof}
Let
\[
\mathcal T=\sum\limits_{n\in\Z^r}a\Big(\frac{n-c}{M}\Big)\e\big(\sqrt2\,n^TA_{r,s}n+\ell\cdot n\big).
\]
Since $A_{r,s}$ is symmetric, for $n,h\in\Z^r$,
\begin{equation}
\begin{aligned}
(n+h)^TA_{r,s}(n+h)-n^TA_{r,s}n&=n^TA_{r,s}h+h^TA_{r,s}n+h^TA_{r,s}h\\
&=2h^TA_{r,s}n+h^TA_{r,s}h\\
&=2(A_{r,s}h)\cdot n+h^TA_{r,s}h.
\end{aligned}\label{eq:quadraticdiff}
\end{equation}
For $h\in\Z^r$, set $v_h=2\sqrt2\,A_{r,s}h$ and
\[
b_h(x)=a\Big(\frac{x+h-c}{M}\Big)\overline{a\Big(\frac{x-c}{M}\Big)}.
\]
If $b_h\not\equiv0$, there exists $x\in\R^r$ such that
$a(\frac{x+h-c}{M})\ne0$ and $a(\frac{x-c}{M})\ne0$.
Since $\supp a\subset[-R,R]^r$, for any $1\le i\le r$, we have
\begin{equation}
|h_i|\le|x_i+h_i-c_i|+|x_i-c_i|\le2RM_i.\label{eq:hrange}
\end{equation}
Then, writing $m=n+h$, by \eqref{eq:quadraticdiff}, \eqref{eq:hrange} and Lemma \ref{lem:compact-intro}, we have
\begin{equation}
\begin{aligned}
|\mathcal T|^2&=\sum\limits_{m,n\in\Z^r}a\Big(\frac{m-c}{M}\Big)\overline{a\Big(\frac{n-c}{M}\Big)}\e\big(\sqrt2(m^TA_{r,s}m-n^TA_{r,s}n)+\ell\cdot(m-n)\big)\\
&=\sum\limits_{h\in\Z^r}\sum\limits_{n\in\Z^r}b_h(n)\e\big(\sqrt2(2(A_{r,s}h)\cdot n+h^TA_{r,s}h)+\ell\cdot h\big)\\
&=\sum\limits_{h\in\Z^r}\e\big(\sqrt2\,h^TA_{r,s}h+\ell\cdot h\big)\sum\limits_{n\in\Z^r}b_h(n)\e(v_h\cdot n)\\
&\le\sum\limits_{h\in\Z^r}\Big|\sum\limits_{n\in\Z^r}b_h(n)\e(v_h\cdot n)\Big|\\
&=\sum\limits_{\substack{h\in\Z^r\\|h_i|\le2RM_i\ (1\le i\le r)}}\Big|\sum\limits_{n\in\Z^r}b_h(n)\e(v_h\cdot n)\Big|\\
&\overset{(*)}{=}\sum\limits_{\substack{h\in\Z^r\\|h_i|\le2RM_i\ (1\le i\le r)}}\Big|\sum\limits_{k\in\Z^r}\widehat{b_h}(k-v_h)\Big|\\
&\le D_R^r \mathcal N_r(a)^2\left(\prod\limits_{i=1}^rM_i\right)\sum\limits_{\substack{h\in\Z^r\\|h_i|\le2RM_i\ (1\le i\le r)}}\prod\limits_{i=1}^r\sum\limits_{k_i\in\Z}(1+M_i|k_i-(v_h)_i|)^{-2},
\end{aligned}\label{eq:T2}
\end{equation}
where \((*)\) follows from the Poisson summation formula. Next, we claim that there exists a constant $C>0$ such that, for all $M\ge1$ and $t\in\R$,
\begin{equation}
\sum\limits_{k\in\Z}(1+M|k-t|)^{-2}\le C(1+M\|t\|_{\R/\Z})^{-2}.\label{eq:periodization}
\end{equation}
In fact, choose $k_0\in\Z$ such that $|t-k_0|=\|t\|_{\R/\Z}\le \frac12$. For every $j\in\Z\setminus\{0\}$, we have
\begin{equation}
|k_0+j-t|\ge|j|-|t-k_0|\ge|j|-\frac12\ge\frac{|j|}{2}.\label{eq:revtriangle}
\end{equation}
Since \(M \geq 1\) and \(\|t\|_{\mathbb R/\mathbb Z}\leq \frac12\),
\begin{equation}
\frac{1}{M^2}
\le\frac94\left(1+\frac M2\right)^{-2}
\le\frac94\bigl(1+M\|t\|_{\R/\Z}\bigr)^{-2},
\qquad M\ge1.\label{eq:M2bound}
\end{equation}
Therefore, by \eqref{eq:revtriangle} and \eqref{eq:M2bound},
\[
\begin{aligned}
\sum\limits_{k\in\Z}(1+M|k-t|)^{-2}&=(1+M|k_0-t|)^{-2}+\sum\limits_{j\in\Z\setminus\{0\}}(1+M|k_0+j-t|)^{-2}\\
&\le(1+M\|t\|_{\R/\Z})^{-2}+\sum\limits_{j\in\Z\setminus\{0\}}\Big(1+\frac{M|j|}{2}\Big)^{-2}\\
&\le(1+M\|t\|_{\R/\Z})^{-2}+\frac{4}{M^2}\sum\limits_{j\in\Z\setminus\{0\}}\frac{1}{j^2}\\
&\le\left(1+9\sum\limits_{j\in\Z\setminus\{0\}}\frac{1}{j^2}\right)(1+M\|t\|_{\R/\Z})^{-2}.
\end{aligned}
\]
The claim is proved. Hence, by \eqref{eq:T2}, \eqref{eq:periodization} and Lemma \ref{lem:lattice-intro} applied with $R$ replaced by $2R$ and $b=0$,
\[
\begin{aligned}
|\mathcal T|^2&\le\left(C D_R\right)^r \mathcal N_r(a)^2\left(\prod\limits_{i=1}^rM_i\right)\sum\limits_{\substack{h\in\Z^r\\|h_i|\le2RM_i}}\prod\limits_{i=1}^r\big(1+M_i\|2\sqrt2(A_{r,s}h)_i\|_{\R/\Z}\big)^{-2}\\
&\le s \left(C D_RC_{2R}\right)^r \mathcal N_r(a)^2\left( \prod\limits_{i=1}^rM_i\right).
\end{aligned}
\]
Taking \(A_R = \max\left\{1,\sqrt{CD_RC_{2R}}\right\}\) proves the asserted estimate.
\end{proof}

Let $\mathcal S(\mathbb R^r)$ denote the Schwartz space
of all functions $a\in C^\infty(\mathbb R^r)$ such that, for every \(K\in\mathbb N_0\) and \(\nu\in\mathbb N_0^r\),
\[
\sup_{u\in\mathbb R^r}
(1+|u|)^K|\partial^\nu a(u)|<\infty,
\]
where $|u|=\left(\sum\limits_{i=1}^r u_i^2\right)^{\frac 12}$.

\begin{proposition}\label{prop:tensor-quant}
	Let $r\in\N$ and $g_1,\ldots,g_r\in\mathcal S(\R)$ satisfy
	\[
	\max_{1\le i\le r}\max_{0\le j\le2}
	\sup_{u\in\R}(1+|u|)^2|g_i^{(j)}(u)|
	\le B,\ B\ge1.
	\]
	Then there exists $C\ge1$, independent of all the above parameters, such that, for all $M_i\ge1$ and $c,\ell\in\R^r$,
	\begin{equation*}
		\left|\sum_{n\in\Z^r}\prod_{i=1}^r
		g_i\left(\frac{n_i-c_i}{M_i}\right)
		\e\bigl(\sqrt2\,n^TA_{r,1}n+\ell\cdot n\bigr)\right|
		\le(CB)^r\prod_{i=1}^rM_i^{\frac 12}.
	\end{equation*}
\end{proposition}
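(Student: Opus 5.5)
The plan is to reduce to Lemma~\ref{lem:expsum-intro} with $s=1$ and $R=1$. We decompose each Schwartz weight $g_i$ into compactly supported pieces with a smooth partition of unity on $\R$. The decay hypothesis on $g_i$ then makes the resulting sum of local bounds converge, with a constant of the form $(CB)^r$.

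\textbf{Step 1: the partition of unity.} Fix once and for all $\varphi\in C_c^\infty(\R)$ with $\supp\varphi\subset[-1,1]$ and $\sum_{k\in\Z}\varphi(u-k)=1$ for all $u\in\R$. This can be done by normalizing a bump that is positive on $[-\tfrac12,\tfrac12]$. For $1\le i\le r$ and $k\in\Z$ set
\[
a_{i,k}(u)=g_i(u+k)\varphi(u).
\]
Then $\supp a_{i,k}\subset[-1,1]$ and
\[
g_i\Bigl(\frac{n_i-c_i}{M_i}\Bigr)=\sum_{k_i\in\Z}a_{i,k_i}\Bigl(\frac{n_i-c_i-k_iM_i}{M_i}\Bigr).
\]

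\textbf{Step 2: derivative bounds for the pieces.} For $|u|\le1$ we have $1+|u+k|\ge\frac12(1+|k|)$. By Leibniz's rule and the hypothesis, for $0\le j\le2$,
\[
\|a_{i,k}^{(j)}\|_\infty\le 4\cdot 4\max_{0\le m\le2}\|\varphi^{(m)}\|_\infty\, B(1+|k|)^{-2}=:C_0B(1+|k|)^{-2},
\]
where $C_0$ is an absolute constant.

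\textbf{Step 3: applying the lemma to each tensor piece.} For $k\in\Z^r$ put $a_k(u)=\prod_{i=1}^r a_{i,k_i}(u_i)\in C_c^\infty(\R^r)$, so that $\supp a_k\subset[-1,1]^r$. Since $\partial^\nu a_k=\prod_i a_{i,k_i}^{(\nu_i)}$, we get
\[
\mathcal N_r(a_k)\le\prod_{i=1}^r C_0B(1+|k_i|)^{-2}.
\]
Expand the product $\prod_i g_i$ via Step~1. Interchanging the sums over $n$ and $k$ is justified by absolute convergence, which holds because $|g_i(u)|\le B(1+|u|)^{-2}$ and $\sum_k\varphi(\cdot-k)$ has bounded overlap. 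The left-hand side of the proposition then becomes
\[
\sum_{k\in\Z^r}\sum_{n\in\Z^r}a_k\Bigl(\frac{n-(c+Mk)}{M}\Bigr)\e\bigl(\sqrt2\,n^TA_{r,1}n+\ell\cdot n\bigr).
\]
Apply Lemma~\ref{lem:expsum-intro} with $R=1$, $s=1$ and center $c+Mk$ to each inner sum. The lemma's bound is uniform in the center, so the modulus is at most
\[
\sum_{k\in\Z^r}A_1^r\,\mathcal N_r(a_k)\prod_{i=1}^rM_i^{\frac12}\le A_1^r\prod_{i=1}^r\Bigl(C_0B\sum_{k_i\in\Z}(1+|k_i|)^{-2}\Bigr)\prod_{i=1}^rM_i^{\frac12}.
\]
This gives the claim with $C=A_1C_0\sum_{k\in\Z}(1+|k|)^{-2}$, which is independent of $r$, $B$, $M$, $c$ and $\ell$.

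\textbf{Main obstacle.} The only delicate point is keeping all constants of the form (absolute constant)$^r$. This requires $\mathcal N_r$ of a tensor product to factor as a product of one-variable norms; it does, because $\nu$ ranges over $\{0,1,2\}^r$ and the maximum of a product of per-coordinate factors splits into a product of per-coordinate maxima. It also requires that the quadratic decay $(1+|u|)^{-2}$ in the hypothesis is exactly what makes $\sum_k(1+|k|)^{-2}$ finite. Both points hold, so I expect the argument to go through directly.
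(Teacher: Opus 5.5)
Your proposal is correct and is essentially the paper's own proof. It uses the same partition of unity $\sum_{j}\psi(t-j)=1$ with $\supp\psi\subset[-1,1]$, and the same localized pieces $a_k(u)=a(u+k)\prod_i\psi(u_i)$ (your $\prod_i a_{i,k_i}(u_i)$). It applies Lemma~\ref{lem:expsum-intro} in the same way, with $R=s=1$ and center $c+Mk$, and obtains the same bound $\mathcal N_r(a_k)\le (C_0B)^r\prod_i(1+|k_i|)^{-2}$, leading to the identical constant $C=A_1C_0\sum_{m\in\Z}(1+|m|)^{-2}$.
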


\begin{proof}
	Set $a(u)=\prod\limits_{i=1}^rg_i(u_i)$.
	Choose a fixed nonnegative function $\psi\in C_c^\infty(\R)$
	such that $\supp\psi\subset[-1,1]$ and
	\begin{equation}\label{sum=1}
		\sum_{j\in\Z}\psi(t-j)=1,\ t\in\R.
	\end{equation}
	For $k\in\Z^r$, define
	\[
	a_k(u)=a(u+k)\prod_{i=1}^r\psi(u_i),
	\ u\in\R^r.
	\]
	Then $a_k\in C_c^\infty(\R^r)$ and
	$\supp a_k\subset[-1,1]^r$.
	Moreover, by \eqref{sum=1},
	\begin{equation}\label{eq:schwartz-decomposition}
		\begin{aligned}
			\sum_{k\in\Z^r}a_k(u-k)
			=a(u)\sum_{k\in\Z^r}\prod_{i=1}^r\psi(u_i-k_i)
			=a(u)\prod_{i=1}^r
			\left(\sum_{k_i\in\Z}\psi(u_i-k_i)\right)
			=a(u).
		\end{aligned}
	\end{equation}
	Let
	\[
	T'=\sum_{n\in\Z^r}
	a\left(\frac{n-c}{M}\right)
	\e\left(\sqrt2\,n^TA_{r,1}n+\ell\cdot n\right).
	\]
	By \eqref{eq:schwartz-decomposition} and
	Lemma~\ref{lem:expsum-intro} applied with $R=s=1$
	and $c$ replaced by $c+Mk$, we have
	\begin{equation}\label{eq:tensor-localization}
		\begin{aligned}
			|T'|
			&=\left|\sum_{k\in\Z^r}\sum_{n\in\Z^r}
			a_k\left(\frac{n-(c+Mk)}{M}\right)
			\e\left(\sqrt2\,n^TA_{r,1}n+\ell\cdot n\right)\right|\\
			&\leq\sum_{k\in\Z^r}\left|\sum_{n\in\Z^r}
			a_k\left(\frac{n-(c+Mk)}{M}\right)
			\e\left(\sqrt2\,n^TA_{r,1}n+\ell\cdot n\right)\right|\\
			&\le A_1^r\left(\prod_{i=1}^rM_i^{\frac 12}\right)
			\sum_{k\in\Z^r}\mathcal N_r(a_k),
		\end{aligned}
	\end{equation}
	where $A_1\ge1$ is the constant in Lemma~\ref{lem:expsum-intro} with $R=1$.
	We claim that there exists a constant $C_0\ge1$, independent of $r$, $B$ and $k$,
	such that
	\[
	\mathcal N_r(a_k)\le(C_0B)^r
	\prod_{i=1}^r(1+|k_i|)^{-2},
	\ k\in\Z^r.
	\]
	In fact, for $u\in[-1,1]^r$, we have
	\[
	1+|k_i|
	\le1+|u_i+k_i|+|u_i|
	\le2(1+|u_i+k_i|),
	\ 1\le i\le r.
	\]
	Consequently,
	\[
	(1+|u_i+k_i|)^{-2}
	\le4(1+|k_i|)^{-2}.
	\]
	Therefore, for $\mu\in\{0,1,2\}^r$, by the assumptions on $g_i$,
	\begin{equation}\label{eq:tensor-partial}
		\begin{aligned}
			|(\partial^\mu a)(u+k)|
			=\prod_{i=1}^r|g_i^{(\mu_i)}(u_i+k_i)|
			\le B^r\prod_{i=1}^r(1+|u_i+k_i|)^{-2}
			\le(4B)^r\prod_{i=1}^r(1+|k_i|)^{-2}.
		\end{aligned}
	\end{equation}
	For $\nu\in\{0,1,2\}^r$, we have
	\begin{equation}\label{eq:tensor-leibniz}
		\partial^\nu a_k(u)
		=\sum_{\mu\le\nu}\binom{\nu}{\mu}
		(\partial^\mu a)(u+k)
		\prod_{i=1}^r\psi^{(\nu_i-\mu_i)}(u_i).
	\end{equation}
	Let $C_\psi=\max\left\{1,\max\limits_{0\le q\le2}
	\|\psi^{(q)}\|_\infty\right\}$.
	Then, by \eqref{eq:tensor-leibniz} and \eqref{eq:tensor-partial},
	we obtain
	\[
	\begin{aligned}
		\|\partial^\nu a_k\|_\infty
		\le C_\psi^r
		\sum_{\mu\le\nu}\binom{\nu}{\mu}
		\sup_{u\in[-1,1]^r}|(\partial^\mu a)(u+k)|
		\le(16C_\psi B)^r
		\prod_{i=1}^r(1+|k_i|)^{-2}.
	\end{aligned}
	\]
	The claim holds by taking $C_0=16C_\psi$.
	Hence, by \eqref{eq:tensor-localization} and the above claim,
	\[
	\begin{aligned}
		|T'|
		&\le(A_1C_0B)^r
		\left(\prod_{i=1}^rM_i^{\frac 12}\right)
		\sum_{k\in\Z^r}\prod_{i=1}^r(1+|k_i|)^{-2}\\
		&=(A_1C_0B)^r
		\left(\prod_{i=1}^rM_i^{\frac 12}\right)
		\left(\sum_{m\in\Z}(1+|m|)^{-2}\right)^r.
	\end{aligned}
	\]
	Taking $C=A_1C_0\sum\limits_{m\in\Z}(1+|m|)^{-2}$
	gives the desired conclusion.
\end{proof}
Let \(\Gamma\) denote the Gamma function. For $\lambda\geq1$ and $u\in\mathbb R$ such that
$x=\lambda+\sqrt{\lambda}\,u>- \frac12$, define
\[
G_\lambda(u)=\lambda^{\frac 14}\e^{-\frac{\lambda}{2}}
\frac{\lambda^{\frac{x}{2}}}{\sqrt{\Gamma(x+1)}}.
\]
By Stirling's formula \cite{Olver1997}, there exist constants
$c_\Gamma,C_\Gamma>0$ such that
\[
c_\Gamma t^{t-\frac12}e^{-t}
\leq\Gamma(t)
\leq C_\Gamma t^{t-\frac12}e^{-t},
\ t\geq\frac12.
\]
Therefore,
\begin{equation}\label{eq:Gestimate}
	\begin{aligned}
		G_\lambda(u)
		&\leq C\lambda^{\frac x2+\frac14}
		(x+1)^{-\frac x2-\frac14}e^{\frac{x+1-\lambda}{2}}\\
		&=C\left(\frac{x+1}{\lambda}\right)^{\frac14}
		\exp\left(
		-\frac12\left[
		(x+1)\log\frac{x+1}{\lambda}-(x+1)+\lambda
		\right]\right),
	\end{aligned}
\end{equation}
where $C>0$ is independent of $\lambda$ and $u$.
Write
\[
L_\lambda(u)
=\log G_\lambda(u)
=\frac14\log\lambda-\frac{\lambda}{2}
+\frac{x}{2}\log\lambda-\frac12\log\Gamma(x+1).
\]
Let $\psi=\Gamma'/\Gamma$.
Then
\begin{equation}\label{eq:Lderiv}
	L_\lambda'(u)
	=\frac{\sqrt\lambda}{2}
	\bigl(\log\lambda-\psi(x+1)\bigr),
	\qquad
	L_\lambda^{(q)}(u)
	=-\frac{\lambda^{\frac q2}}{2}\psi^{(q-1)}(x+1),
	\ q\geq2.
\end{equation}
Define polynomials $P_j$ recursively by $P_0=1$ and
\begin{equation}
P_{j+1}(X_1,\dots,X_{j+1})=X_1P_j(X_1,\dots,X_j)+\sum\limits_{q=1}^jX_{q+1}\frac{\partial P_j}{\partial X_q}(X_1,\dots,X_j),\qquad j\ge0,\label{eq:Pjrecursion}
\end{equation}
where the sum is understood to be zero when $j=0$.

\begin{lemma}\label{lem:Gderivative}
For $j\ge0$,
\[
G_\lambda^{(j)}=G_\lambda P_j(L_\lambda',\dots,L_\lambda^{(j)}).
\]
\end{lemma}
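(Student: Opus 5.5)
The plan is to argue by induction on $j$, using only the identity $G_\lambda=\e^{L_\lambda}$, which holds on the open set where $x=\lambda+\sqrt\lambda\,u>-\frac12$. On that set $\Gamma(x+1)>0$, so $L_\lambda$ is smooth and all derivatives make sense.

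The base case $j=0$ is immediate, since $P_0=1$ and $G_\lambda^{(0)}=G_\lambda$. For $j=1$, differentiating $G_\lambda=\e^{L_\lambda}$ gives $G_\lambda'=G_\lambda L_\lambda'$. This matches the recursion, which yields $P_1(X_1)=X_1$.

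For the inductive step, assume $G_\lambda^{(j)}=G_\lambda P_j(L_\lambda',\dots,L_\lambda^{(j)})$ and differentiate once more with the product rule. The derivative of the factor $G_\lambda$ contributes
\[
G_\lambda' P_j(L_\lambda',\dots,L_\lambda^{(j)})=G_\lambda\,L_\lambda'\,P_j(L_\lambda',\dots,L_\lambda^{(j)}).
\]
Differentiating the polynomial factor by the chain rule gives
\[
G_\lambda\sum_{q=1}^j\frac{\partial P_j}{\partial X_q}(L_\lambda',\dots,L_\lambda^{(j)})\,L_\lambda^{(q+1)}.
\]
Adding the two contributions and comparing with \eqref{eq:Pjrecursion}, evaluated at $X_q=L_\lambda^{(q)}$ for $1\le q\le j+1$, we get exactly $G_\lambda P_{j+1}(L_\lambda',\dots,L_\lambda^{(j+1)})$. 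This closes the induction.

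There is no real obstacle here. The only points needing care are, first, that the identity is claimed only on the domain where $G_\lambda$ is defined, which is open in $u$, and second, that the index bookkeeping in the chain rule matches the recursion: the variable $X_q$ is replaced by $X_{q+1}$, corresponding to $(L_\lambda^{(q)})'=L_\lambda^{(q+1)}$.
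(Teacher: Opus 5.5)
Your proposal is correct and matches the paper's proof: induction on $j$ with base case $P_0=1$, then the product rule applied to $G_\lambda P_j(L_\lambda',\dots,L_\lambda^{(j)})$, using $G_\lambda'=G_\lambda L_\lambda'$ and the chain rule $(L_\lambda^{(q)})'=L_\lambda^{(q+1)}$, which reproduces the recursion \eqref{eq:Pjrecursion}. Your separate $j=1$ check and your remark about the open domain $x>-\frac12$ are harmless additions that the paper leaves implicit.
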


\begin{proof}
We prove the assertion by induction on $j$. The case $j=0$ follows from $P_0=1$. Suppose that the assertion holds for some $j\ge0$. Since $G_\lambda'=G_\lambda L_\lambda'$, we have
\[
G_\lambda^{(j+1)}=G_\lambda L_\lambda'P_j(L_\lambda',\dots,L_\lambda^{(j)})+G_\lambda\sum\limits_{q=1}^j\frac{\partial P_j}{\partial X_q}(L_\lambda',\dots,L_\lambda^{(j)})L_\lambda^{(q+1)}=G_\lambda P_{j+1}(L_\lambda',\dots,L_\lambda^{(j+1)}).
\]
\end{proof}

\begin{lemma}\label{lem:Pjmonomials}
For every $j\ge1$,  every monomial $X_1^{\alpha_1}\cdots X_j^{\alpha_j}$ occurring in $P_j$ satisfies $\sum\limits_{r=1}^jr\alpha_r=j$.
\end{lemma}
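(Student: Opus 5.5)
We argue by induction on $j$, using the recursion \eqref{eq:Pjrecursion}. It is convenient to assign each variable $X_r$ the weight $r$, so that a monomial $X_1^{\alpha_1}\cdots X_j^{\alpha_j}$ has weight $\sum_r r\alpha_r$. The claim is then that $P_j$ is weighted-homogeneous of weight $j$ for every $j\ge1$. We also note that $P_0=1$ has weight $0$, so it fits the same pattern. Hence we may prove the stronger statement that every monomial of $P_j$ has weight exactly $j$ for all $j\ge0$, which gives a clean base case.

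\textbf{Base case.} For $j=0$ the statement is trivial, since the only monomial of $P_0=1$ has weight $0$. As a sanity check, \eqref{eq:Pjrecursion} gives $P_1=X_1P_0=X_1$, which has weight $1$.

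\textbf{Inductive step.} Assume every monomial of $P_j$ has weight $j$, and examine the two kinds of terms in \eqref{eq:Pjrecursion}.
\begin{enumerate}
\item[(a)] Multiplying a monomial of weight $j$ by $X_1$ raises its weight by $1$. So every monomial of $X_1P_j$ has weight $j+1$.
\item[(b)] Take a monomial $X^\alpha$ of $P_j$ with $\alpha_q\ge1$. Then $\partial X^\alpha/\partial X_q=\alpha_q X^{\alpha-e_q}$ has weight $j-q$, and multiplying by $X_{q+1}$ adds $q+1$, giving weight $j+1$. If $\alpha_q=0$, the derivative vanishes and contributes nothing.
\end{enumerate}
By linearity, every monomial of $P_{j+1}$ arises from some such term and therefore has weight $j+1$. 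Cancellation between terms can only remove monomials, never create ones of a different weight.

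\textbf{Point needing care.} We must make sure no variable outside $X_1,\dots,X_{j+1}$ appears, so that the weight sum is over $r\le j+1$ as the statement requires. This holds because $q\le j$ in the sum of \eqref{eq:Pjrecursion}, so the largest new index introduced is $X_{j+1}$. Together with the inductive step, this completes the induction.
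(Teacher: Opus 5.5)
Your proof is correct and is essentially the paper's argument: induction on $j$ through the recursion, checking that both $X_1X^\alpha$ and $X_{q+1}\,\partial X^\alpha/\partial X_q$ raise the weight $\sum_r r\alpha_r$ by exactly one. The only cosmetic difference is that you start the induction at $j=0$ (with $P_0=1$ of weight $0$), while the paper starts at $P_1=X_1$.
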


\begin{proof}
We prove the assertion by induction on $j$. The assertion holds for $P_1(X_1)=X_1$. Suppose that it holds for $P_j$ and set $\alpha_{j+1}=0$. For every monomial $X^\alpha$ occurring in $P_j$,
\[
X_1X^\alpha=X^{\alpha+\epsilon_1},\qquad X_{q+1}\frac{\partial X^\alpha}{\partial X_q}=\alpha_qX^{\alpha-\epsilon_q+\epsilon_{q+1}},\ \alpha_q\ge1,
\]
where $\epsilon_q=(0,\dots,0,\underset{q\text{th}}{1},0,\dots,0)\in\R^{j+1}$. Moreover,
\[
\sum\limits_{r=1}^{j+1}r(\alpha+\epsilon_1)_r=\sum\limits_{r=1}^{j+1}r\alpha_r+1=\sum\limits_{r=1}^jr\alpha_r+1=j+1,
\]
and
\[
\sum\limits_{r=1}^{j+1}r(\alpha-\epsilon_q+\epsilon_{q+1})_r=\sum\limits_{r=1}^{j+1}r\alpha_r-q+(q+1)=\sum\limits_{r=1}^jr\alpha_r+1=j+1.
\]
Thus, by \eqref{eq:Pjrecursion}, every monomial $X^\beta$ occurring in $P_{j+1}$ satisfies $\sum\limits_{r=1}^{j+1}r\beta_r=j+1$.
\end{proof}

\begin{lemma}\label{lem:Gbounds}
For every $K,j\in\N_0$,
\[
\sup\limits_{\substack{\lambda\ge1\\\lambda+\sqrt\lambda\,u>-\frac12}}(1+|u|)^K|G_\lambda^{(j)}(u)|<\infty.
\]
\end{lemma}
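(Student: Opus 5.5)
By Lemma~\ref{lem:Gderivative} and Lemma~\ref{lem:Pjmonomials}, $G_\lambda^{(j)}$ is $G_\lambda$ times a finite linear combination (with coefficients independent of $\lambda$) of monomials $\prod_{q}(L_\lambda^{(q)})^{\alpha_q}$ with $\sum q\alpha_q=j$. So it suffices to prove two bounds, uniformly in $\lambda\ge1$ and on the admissible range $x=\lambda+\sqrt\lambda u>-\frac12$. The first is super-polynomial decay of $G_\lambda$:
\[
G_\lambda(u)\le C_K(1+|u|)^{-K}\ \text{ for every } K.
\]
The second is polynomial growth of the log-derivatives:
\[
|L_\lambda^{(q)}(u)|\le C_q(1+|u|)^{N_q}\ \text{ for some } N_q.
\]
Multiplying these bounds gives the lemma, since polynomial factors are absorbed by taking $K$ larger.

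For the derivative bounds I will use \eqref{eq:Lderiv} together with the standard polygamma estimates for $t=x+1\ge\frac12$. These are $|\psi(t)-\log t|\le C/t$ and $|\psi^{(q-1)}(t)|\le C_q t^{1-q}$ for $q\ge2$. For $q\ge2$ this gives $|L^{(q)}_\lambda|\le C_q\lambda^{q/2}(x+1)^{1-q}$. When $x+1\ge\lambda/2$ this is $O(\lambda^{1-q/2})=O(1)$. When $x+1<\lambda/2$ we have $u<-\sqrt\lambda/2$, hence $\lambda\le4u^2$, and the crude bound $(x+1)^{1-q}\le 2^{q-1}$ gives $|L^{(q)}_\lambda|\le C(1+|u|)^q$. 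For $q=1$, I write
\[
\log\lambda-\psi(x+1)=-\log\frac{x+1}{\lambda}+O\Big(\frac1{x+1}\Big),
\]
so that
\[
|L'_\lambda|\le \frac{\sqrt\lambda}{2}\Big|\log\Big(1+\frac{u}{\sqrt\lambda}+\frac1\lambda\Big)\Big|+O\Big(\frac{\sqrt\lambda}{x+1}\Big).
\]
In the region $x+1\ge\lambda/2$, the inequality $|\log(1+y)|\le 2|y|$ for $y\ge-\frac12$ gives $O(1+|u|)$. In the complementary region, $\sqrt\lambda\le2|u|$ and $|\log((x+1)/\lambda)|\le\log(2\lambda)$, so everything is polynomial in $|u|$.

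\textbf{Main step: the decay of $G_\lambda$.} This is the step I expect to be the real obstacle, and I will use \eqref{eq:Gestimate}. Set $\phi(y)=y\log y-y+1\ge0$ and $y=(x+1)/\lambda$. The exponent in \eqref{eq:Gestimate} is then $-\frac{\lambda}{2}\phi(y)$, with $y-1=u/\sqrt\lambda+1/\lambda$. I split into cases.
\begin{enumerate}
\item[(a)] If $|y-1|\le 1$, then $\phi(y)\ge c(y-1)^2$, so
\[
\frac{\lambda}{2}\phi(y)\ge c'\big(u+\lambda^{-1/2}\big)^2\ge c'u^2/2-O(1).
\]
This gives Gaussian decay, and the prefactor $y^{1/4}\le 2^{1/4}$ is bounded.
\item[(b)] If $y>2$, then $\phi(y)\ge c\,y\log y\ge c\,y$. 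Hence $\lambda\phi(y)\ge c(x+1)\ge c\sqrt\lambda|u|/2\ge c|u|/2$, which gives exponential decay in $|u|$. The prefactor $y^{1/4}$ is dominated by $e^{\lambda\phi(y)/4}$.
\item[(c)] If $y<0$ is possible, i.e.\ $x+1\in(\frac12,\dots)$ small, then $y\ge 1/(2\lambda)$ and $\phi(y)\ge 1-y-\dots$. Near $y=0$ we have $\phi\approx1$, so $\lambda\phi(y)\ge c\lambda$. Since $|u|\le\sqrt\lambda+1$ in this regime, $e^{-c\lambda}\le C_Ke^{}(1+|u|)^{-K}$. This case is really $y\in(0,\frac12)$ with $y\ge 1/(2\lambda)$ by admissibility; the prefactor $y^{1/4}\le1$.
\end{enumerate}
Together the three cases cover all admissible $u$ and yield the bound $\sup(1+|u|)^K G_\lambda(u)<\infty$. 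Combining this with the polynomial bounds on the $L_\lambda^{(q)}$ and Lemma~\ref{lem:Gderivative} completes the proof.
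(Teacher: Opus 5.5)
Your proposal is correct and follows essentially the same route as the paper: decay of $G_\lambda$ from \eqref{eq:Gestimate}, polynomial growth of $L_\lambda^{(q)}$ from the digamma and polygamma bounds with the same split according to whether $x+1\ge\lambda/2$, and then Lemmas~\ref{lem:Gderivative} and~\ref{lem:Pjmonomials} to combine them. The only difference is in the decay step: the paper avoids your case split by using the integral remainder $t\log\frac t\lambda-t+\lambda=(t-\lambda)^2\int_0^1\frac{1-s}{\lambda+s(t-\lambda)}\,ds\ge\frac{(t-\lambda)^2}{2(\lambda+|t-\lambda|)}$, which gives $G_\lambda(u)\le C\e^{-|u|/8}$ in one stroke; also, your case (c) is unnecessary, because $y=(x+1)/\lambda>0$ always, so cases (a) and (b) already cover every admissible $u$.
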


\begin{proof}
	Let $t=\lambda+\sqrt\lambda\,u+1>\frac12$ and
	$y=\frac{t-\lambda}{\sqrt\lambda}$. 
	Since $\lambda\ge1$, we have
	\begin{align}\label{222}
		t\log\frac t\lambda-t+\lambda
		=(t-\lambda)^2
		\int_0^1\frac{1-s}{\lambda+s(t-\lambda)}\,ds
		\ge\frac{(t-\lambda)^2}{2(\lambda+|t-\lambda|)}
		\ge\frac{y^2}{2(1+|y|)}
		\ge\frac{|y|-1}{2}.
	\end{align}
	Moreover, $|y-u|=\lambda^{-\frac12}\le1$ and
	$\frac t \lambda\le1+|y|\le2+|u|$. 
	By \eqref{eq:Gestimate} and \eqref{222},
	\begin{equation}\label{eq:Gexponential}
	\begin{aligned}
		G_\lambda(u)
		\le C\left(\frac t\lambda\right)^{\frac14}
		\exp\left(-\frac12\left(t\log\frac t\lambda-t+\lambda\right)\right)
		\le C(2+|u|)^{\frac14}\e^{-\frac{|u|}{4}}
		\le C\e^{-\frac{|u|}{8}}.
	\end{aligned}
\end{equation}
We claim that, for \(q\ge1\),
\begin{equation}\label{eq:Lpolynomial}
	|L_\lambda^{(q)}(u)|
	\le C_q(1+|u|)^{2q}.
\end{equation}
	To prove this, we distinguish two cases.
	\medskip
	\noindent\textbf{Case 1:} $t\ge \frac \lambda 2$.
	In this case, we have
	\begin{equation}
		\Big|\log\frac{\lambda}{t}\Big|=\int_{\min\{\lambda,t\}}^{\max\limits\{\lambda,t\}}\frac{d\zeta}{\zeta}\le\frac{2|t-\lambda|}{\lambda}.\label{eq:logbound}
	\end{equation}
	By \cite[Eq.~(1.2)]{GuoQiZhaoLuo2015}, we have
	\begin{equation}\label{eq:digammabound}
		0\leq\log \eta-\psi(\eta)\leq\frac1\eta,
		\ \eta>0,
	\end{equation}
	Hence, by  \eqref{eq:Lderiv}, \eqref{eq:digammabound} and \eqref{eq:logbound},
	\begin{equation*}
		|L_\lambda'(u)|\le\frac{\sqrt\lambda}{2}\Big(\Big|\log\frac{\lambda}{t}\Big|+\frac{1}{t}\Big)\le\frac{|t-\lambda|}{\sqrt\lambda}+\frac{1}{\sqrt\lambda}\le|u|+2\le 2(1+|u|)^2.\label{eq:Lprimebound}
	\end{equation*}
	For every integer $q\geq2$, by \cite[Eq.~(1.3)]{GuoQiZhaoLuo2015}, we have
	\begin{equation}\label{eq:psibound}
		|\psi^{(q-1)}(\eta)|\leq C_q \eta^{1-q},
		\ \eta\geq \frac 12,
	\end{equation}
	where \(C_q\) depends only on \(q\).
	Then, by \eqref{eq:Lderiv} and  \eqref{eq:psibound},
	\begin{equation*}
		|L_\lambda^{(q)}(u)|\le C_q\lambda^{\frac q2}t^{1-q}\le C_q\lambda^{1-\frac q2}\le C_q \le C_q(1+|u|)^{2q}.
	\end{equation*}
	\noindent\textbf{Case 2:} $t<\frac{\lambda}{2}$.
	In this case,
	\(|u|=\frac{\lambda-t+1}{\sqrt\lambda}
	\ge\frac{\sqrt\lambda}{2}\).
	Since \(t >\frac12\), by \eqref{eq:Lderiv} and \eqref{eq:digammabound},
	\[
	\begin{aligned}
		|L_\lambda'(u)|
		\le\frac{\sqrt\lambda}{2}
		\Big(\log\frac{\lambda}{t}+\frac1t\Big)
		\le C\sqrt\lambda(1+\log\lambda)
		\le C\lambda
		\le C(1+|u|)^2.
	\end{aligned}
	\]
	Then, by \eqref{eq:psibound} and \eqref{eq:Lderiv},
	for every integer $q\ge2$,
	\[
	|L_\lambda^{(q)}(u)|
	\le C_q\lambda^{\frac q2}t^{1-q}
	\le C_q\lambda^{\frac q2}
	\le C_q(1+|u|)^q
	\le C_q(1+|u|)^{2q}.
	\]
	The claim is proved.
	
	By Lemma \ref{lem:Pjmonomials}, every monomial
	$X_1^{\alpha_1}\cdots X_j^{\alpha_j}$ occurring in $P_j$
	satisfies $\sum\limits_{q=1}^j q\alpha_q=j$. Thus, by
	\eqref{eq:Lpolynomial},
	\[
	\prod_{q=1}^j|L_\lambda^{(q)}(u)|^{\alpha_q}
	\le\left(\prod_{q=1}^j C_q^{\alpha_q}\right)
	(1+|u|)^{2\sum\limits_{q=1}^j q\alpha_q}
	\le C_j(1+|u|)^{2j}.
	\]
	Since $P_j$ has finitely many terms and its coefficients
	depend only on $j$, it follows that
	\begin{align}\label{pp}
	|P_j(L_\lambda'(u),\dots,L_\lambda^{(j)}(u))|
	\le C_j(1+|u|)^{2j}.
	\end{align}
	Therefore, by Lemma \ref{lem:Gderivative}, \eqref{pp} and
	\eqref{eq:Gexponential}, we obtain
	\[
	|G_\lambda^{(j)}(u)|
	\le C_j(1+|u|)^{2j}\e^{-\frac{|u|}{8}},
	\ j\in\N_0,
	\]
	where the case $j=0$ follows directly from
	\eqref{eq:Gexponential}. Therefore, for every $K,j\in\N_0$,
	\[
	(1+|u|)^K|G_\lambda^{(j)}(u)|
	\le C_j(1+|u|)^{K+2j}\e^{-\frac{|u|}{8}}
	\le C_{K,j}.
	\]
\end{proof}

For $\lambda\ge0$, define $f_\lambda:\N_0\to[0,\infty)$ by
\begin{equation*}
f_\lambda(k)=\begin{cases}
\e^{-\frac \lambda 2}\dfrac{\lambda^{\frac k2}}{\sqrt{k!}},&\lambda>0,\ k\ge0,\\
1,&\lambda=0,\ k=0,\\
0,&\lambda=0,\ k\ge1.
\end{cases}
\end{equation*}
Next, for $\lambda\ge1$, we extend $f_\lambda$ to a smooth function on $\R$ that vanishes at every negative integer. Let
\[
\vartheta(v)=\begin{cases}\e^{-\frac 1v},&v>0,\\0,&v\le0,\end{cases}\qquad
\chi(x)=\frac{\vartheta(x+\frac 12)}{\vartheta(x+ \frac 12)+\vartheta(-x)}.
\]
It is easy to verify that $\chi\in C^\infty(\R)$. Moreover, $0\le\chi\le1$ and
\begin{equation}
\chi(x)=\begin{cases}0,&x\le- \frac 12,\\1,&x\ge0.\end{cases}\label{eq:chi}
\end{equation}
For $\lambda\geq1$, use the same notation for the extension
\[
f_\lambda(x)=
\begin{cases}
\displaystyle\chi(x)\e^{- \frac \lambda2}
\frac{\lambda^{\frac x2}}{\sqrt{\Gamma(x+1)}},&x>-\frac12,\\
0,&x\leq-\frac12.
\end{cases}
\]

\begin{lemma}\label{lem:fsmooth}
For every $\lambda\ge1$, $f_\lambda\in C^\infty(\R)$. Moreover,
\[
f_\lambda(k)=
\begin{cases}
\e^{-\frac \lambda 2}\dfrac{\lambda^{\frac k2}}{\sqrt{k!}},&k\in\N_0,\\
0,&k\in\Z,\ k<0.
\end{cases}
\]
\end{lemma}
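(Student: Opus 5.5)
We will establish smoothness separately on the two overlapping open sets $(-\infty,-\tfrac12)$ and $(-1,\infty)$, which cover $\mathbb R$. After that, we evaluate $f_\lambda$ at the integers.

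On $(-\infty,-\tfrac12)$ we have $f_\lambda\equiv0$ by definition, so $f_\lambda$ is smooth there.

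On $(-1,\infty)$ we use the function
\[
g(x)=\chi(x)\,\e^{-\frac\lambda2}\,\lambda^{\frac x2}\,\Gamma(x+1)^{-\frac12}.
\]
We first check that $g$ is smooth. The factor $\chi$ is in $C^\infty(\mathbb R)$, and $\lambda^{x/2}=\exp(\tfrac x2\log\lambda)$ is smooth. For $x>-1$ we have $x+1>0$, so $\Gamma(x+1)$ is smooth and strictly positive; hence $\Gamma(x+1)^{-1/2}$ is smooth on $(-1,\infty)$. Thus $g\in C^\infty((-1,\infty))$.

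Next we check that $g=f_\lambda$ on $(-1,\infty)$. For $x>-\tfrac12$ this holds by definition. For $-1<x\le-\tfrac12$, \eqref{eq:chi} gives $\chi(x)=0$, so $g(x)=0=f_\lambda(x)$.

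The two open sets cover $\mathbb R$, and $f_\lambda$ agrees with a smooth function on each. Hence $f_\lambda\in C^\infty(\mathbb R)$.

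It remains to compute the values at integers.

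For $k\in\mathbb N_0$ we have $k>-\tfrac12$, and $\chi(k)=1$ by \eqref{eq:chi}. Since $\Gamma(k+1)=k!$, we get $f_\lambda(k)=\e^{-\lambda/2}\lambda^{k/2}/\sqrt{k!}$.

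For negative integers $k$ we have $k\le-1<-\tfrac12$, so $f_\lambda(k)=0$ by definition.

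The only point needing care is the possible seam at $x=-\tfrac12$, where the definition of $f_\lambda$ switches branches. The overlap argument handles it: on the open interval $(-1,-\tfrac12]$ the smooth function $g$ already vanishes because of $\chi$. We should also note that we never evaluate $\Gamma$ at nonpositive arguments, since $x>-1$ guarantees $x+1>0$.
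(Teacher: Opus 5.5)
Your proof is correct and uses essentially the same idea as the paper: $\Gamma(x+1)^{-1/2}$ is smooth on $(-1,\infty)$, the seam at $-\tfrac12$ is handled by the vanishing of $\chi$ there, and the values at the integers are evaluated the same way. The only difference is in execution. The paper checks the seam by showing that all right-hand derivatives of $\chi h_\lambda$ tend to $0$ at $-\tfrac12$ (using $\chi^{(q)}(-\tfrac12)=0$), whereas your open cover by $(-\infty,-\tfrac12)$ and $(-1,\infty)$ gives smoothness directly, with no one-sided-limit argument needed.
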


\begin{proof}
Since $\Gamma$ is positive and smooth on $(0,\infty)$,
\[
h_\lambda(x)=\e^{-\frac \lambda 2}\frac{\lambda^{\frac x2}}{\sqrt{\Gamma(x+1)}}\in C^\infty((-1,\infty)).
\]
Thus $f_\lambda\in C^\infty(\R\setminus\{-\frac12\})$. For every $q\in\N_0$,  it is easy to verify that \( \chi^{(q)}\Big(-\frac12\Big)=0\).
Therefore,
\[
\lim_{x\to(-\frac12)^+}\frac{d^q}{dx^q}\big(\chi(x)h_\lambda(x)\big)=\sum\limits_{j=0}^q\binom{q}{j}\chi^{(j)}\left(-\frac 12\right)h_\lambda^{(q-j)}\left(-\frac 12\right)=0.
\]
Thus $f_\lambda\in C^\infty(\R)$. For every $k\in\N_0$, we have $\chi(k)=1$ and $\Gamma(k+1)=k!$, so $f_\lambda(k)=\e^{-\frac \lambda 2}\dfrac{\lambda^{\frac k2}}{\sqrt{k!}}$. For every negative integer $k$, we have $k\le-\frac 12$, and hence $f_\lambda(k)=0$.
\end{proof}

\begin{proposition}\label{lem:profiles-intro}
For $\lambda\ge1$, define
\[
g_\lambda(u)=\lambda^{\frac14}f_\lambda(\lambda+\sqrt\lambda\,u),\ u\in\R.
\]
Then, for every $K,j\in\N_0$, there exists a constant $C_{K,j}>0$, independent of $\lambda$, such that
\begin{equation}\label{star}
\sup\limits_{\lambda\ge1}\sup\limits_{u\in\R}(1+|u|)^K|g_\lambda^{(j)}(u)|\le C_{K,j}.
\end{equation}
In particular, $\{g_\lambda:\lambda\ge1\}$ is a bounded subset of $\mathcal S(\R)$.
\end{proposition}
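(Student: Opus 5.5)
The plan is to reduce everything to Lemma~\ref{lem:Gbounds} by writing $g_\lambda$ as a product of the cutoff $\chi$ and the function $G_\lambda$. With $x=\lambda+\sqrt\lambda\,u$, the definitions give, for $x>-\frac12$,
\[
g_\lambda(u)=\chi(\lambda+\sqrt\lambda\,u)\,G_\lambda(u),
\]
and $g_\lambda(u)=0$ for $x\le-\frac12$. Since $\chi$ and all its derivatives vanish at $x\le-\frac12$ (by \eqref{eq:chi} and smoothness), the function $g_\lambda$ is $C^\infty$ on $\R$ by Lemma~\ref{lem:fsmooth}. On the region $x>-\frac12$, the Leibniz rule gives
\[
g_\lambda^{(j)}(u)=\sum_{i=0}^j\binom{j}{i}\lambda^{\frac i2}\chi^{(i)}(\lambda+\sqrt\lambda\,u)\,G_\lambda^{(j-i)}(u).
\]
Outside this region the derivatives are zero, so there is nothing to prove there.

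\textbf{The $i=0$ term.} Since $0\le\chi\le1$, this term is bounded by $|G_\lambda^{(j)}(u)|$. Lemma~\ref{lem:Gbounds} then bounds $(1+|u|)^K$ times it uniformly in $\lambda\ge1$.

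\textbf{The terms with $i\ge1$.} Here $\chi^{(i)}$ is supported in $[-\frac12,0]$, since $\chi$ is constant outside this interval. So only $u$ with $-\frac12\le\lambda+\sqrt\lambda\,u\le0$ contribute, and there $|u|\ge\sqrt\lambda$. This is the one delicate point: the factor $\lambda^{i/2}$ blows up with $\lambda$. It is absorbed because $\lambda^{i/2}\le |u|^{i}\le(1+|u|)^i$. We also have $\|\chi^{(i)}\|_\infty<\infty$, a constant independent of $\lambda$. Hence
\[
(1+|u|)^K\Big|\lambda^{\frac i2}\chi^{(i)}(x)G_\lambda^{(j-i)}(u)\Big|\le\|\chi^{(i)}\|_\infty\,(1+|u|)^{K+i}\,|G_\lambda^{(j-i)}(u)|.
\]
Applying Lemma~\ref{lem:Gbounds} with $K$ replaced by $K+i$ bounds the right-hand side uniformly in $\lambda\ge1$ and in the admissible $u$.

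Summing over the finitely many values of $i$ gives the constant $C_{K,j}$ in \eqref{star}. Boundedness in $\mathcal S(\R)$ follows immediately, because the Schwartz seminorms are exactly the quantities in \eqref{star}. I expect no real obstacle beyond the absorption of $\lambda^{i/2}$ in the boundary layer. Lemma~\ref{lem:Gbounds} already carries the substantive analysis: the Stirling and Gaussian decay and the polygamma estimates.
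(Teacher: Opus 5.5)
Your proposal is correct and follows the paper's proof essentially step for step. As in the paper, you write $g_\lambda=\chi(x)G_\lambda$ on $x>-\frac12$ and expand by Leibniz, then absorb the factor $\lambda^{r/2}$ in the terms with $r\ge1$ using $\sqrt\lambda\le|u|$ on $-\frac12<x\le0$, and finish with Lemma~\ref{lem:Gbounds} applied with $K+r$ in place of $K$.
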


\begin{proof}
	Put $x=\lambda+\sqrt{\lambda}u$.
	For $x\leq-\frac12$ and $j\in\N_0$,
	\[
	g_\lambda^{(j)}(u)
	=\lambda^{\frac14+\frac j2}f_\lambda^{(j)}(x)=0.
	\]
	Hence \eqref{star} holds.
	It remains to consider the case $x>-\frac12$.
	In this case, $g_\lambda(u)=\chi(x)G_\lambda(u)$.
	For $1\leq r\leq j$, by \eqref{eq:chi}, $\supp\chi^{(r)}\subset[- \frac 12,0]$.
	Moreover, for $-\frac 12<x\leq0$, we have
	\begin{equation}\label{eq:localbound}
		\sqrt{\lambda}
		\leq\frac{\lambda-x}{\sqrt{\lambda}}
		=|u|\leq1+|u|.
	\end{equation}
	Therefore, for $x>-\frac 12$, by \eqref{eq:localbound} and
	Lemma~\ref{lem:Gbounds},
	\[
	\begin{aligned}
		(1+|u|)^K|g_\lambda^{(j)}(u)|
		&\leq(1+|u|)^K|\chi(x)G_\lambda^{(j)}(u)|
		+\sum_{r=1}^j\binom{j}{r}
		(1+|u|)^K\lambda^{\frac r2}
		|\chi^{(r)}(x)G_\lambda^{(j-r)}(u)|\\
		&\leq(1+|u|)^K|G_\lambda^{(j)}(u)|
		+C_j\sum_{r=1}^j
		(1+|u|)^{K+r}|G_\lambda^{(j-r)}(u)|\\
		&\leq C_{K,j}.
	\end{aligned}
	\]
	
\end{proof}

Let
\[
\Delta_{d-1}=\Big\{p=(p_1,\dots,p_d)\in[0,1]^d:\sum\limits_{j=1}^dp_j=1\Big\}.
\]
For $N\in\mathbb N_0$, set
\[
\Lambda_{N,d}=\{\alpha\in\mathbb N_0^d:|\alpha|=N\}.
\]
For $p\in\Delta_{d-1}$ and $\alpha\in\Lambda_{N,d}$,
define $w_{N,p}(\alpha)=\sqrt{\frac{N!}{\alpha!}}\,p^{\frac \alpha 2}$.
Then
\begin{equation}
\sum\limits_{\alpha\in\Lambda_{N,d}}w_{N,p}(\alpha)^2=\sum\limits_{\substack{\alpha\in\N_0^d\\|\alpha|=N}}\frac{N!}{\alpha!}\prod\limits_{j=1}^dp_j^{\alpha_j}=\left(\sum\limits_{j=1}^dp_j\right)^N=1.\label{eq:multinomial}
\end{equation}
For $\xi=(\xi_1,\dots,\xi_d)\in\R^d$, set
\begin{equation}
S_N(p,\xi)=\sum\limits_{\alpha\in\Lambda_{N,d}}w_{N,p}(\alpha)\e\left(\sqrt2\sum\limits_{j=1}^d\alpha_j^2+\xi\cdot\alpha\right),\label{eq:SN}
\end{equation}
where $\xi\cdot\alpha=\sum\limits_{j=1}^d\xi_j\alpha_j$.

\begin{lemma}\label{lem:Fsup}
For every $N\in\N$, we have
\[
\|F_{N,d}\|_\infty=\sup\limits_{\substack{p\in\Delta_{d-1}\\\xi\in\R^d}}|S_N(p,\xi)|.
\]
\end{lemma}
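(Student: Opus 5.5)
Since $F_{N,d}$ is a homogeneous polynomial, the plan is to write a point of the sphere in polar form and compare $F_{N,d}$ there with $S_N(p,\xi)$ directly. Any $z\in\partial\mathbb B_d$ can be written as $z_j=\sqrt{p_j}\,\e(\xi_j)$ with $p=(|z_1|^2,\dots,|z_d|^2)\in\Delta_{d-1}$ and $\xi_j\in\R$; when $z_j=0$ the angle $\xi_j$ is arbitrary. Conversely, every pair $(p,\xi)\in\Delta_{d-1}\times\R^d$ defines such a point $z$ with $|z|^2=\sum p_j=1$. So the map $(p,\xi)\mapsto z$ from $\Delta_{d-1}\times\R^d$ onto $\partial\mathbb B_d$ is surjective.

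Next I substitute this into the definition of $F_{N,d}$. For $\alpha\in\Lambda_{N,d}$ one has $z^\alpha=\prod_j p_j^{\alpha_j/2}\e(\xi_j\alpha_j)=p^{\alpha/2}\e(\xi\cdot\alpha)$, with the convention $0^0=1$. Combined with the coefficient $\e\bigl(\sqrt2\sum_k\alpha_k^2\bigr)\sqrt{N!/\alpha!}$, this gives, term by term,
\[
F_{N,d}(z)=\sum_{\alpha\in\Lambda_{N,d}}\sqrt{\frac{N!}{\alpha!}}\,p^{\frac\alpha2}\,\e\Bigl(\sqrt2\sum_{k=1}^d\alpha_k^2+\xi\cdot\alpha\Bigr)=S_N(p,\xi).
\]
Here I use that $\exp(2\pi i t)=\e(t)$ with the paper's notation $\e(t)=e^{2\pi i t}$.

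Taking the supremum over $z\in\partial\mathbb B_d$ on the left, and over $(p,\xi)$ on the right via surjectivity, yields the identity. Since $F_{N,d}$ is a polynomial, $\|F_{N,d}\|_\infty$ means its supremum on the sphere; this is consistent with the paper's definition of $\|p\|_\infty$.

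The only point needing care is boundary points where some $p_j=0$. There $z^\alpha$ vanishes unless $\alpha_j=0$, which matches $p_j^{\alpha_j/2}=0$ for $\alpha_j\ge1$ under the convention $0^0=1$ already implicit in \eqref{eq:multinomial}. The argument is otherwise direct, and the hypothesis $N\ge1$ plays no essential role.
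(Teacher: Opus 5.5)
Your proposal is correct and matches the paper's proof: you write $z_j=\sqrt{p_j}\,\e(\xi_j)$, get $F_{N,d}(z)=S_N(p,\xi)$ term by term, and compare suprema. Like the paper, you use both that every $z\in\partial\mathbb B_d$ arises this way and that every pair $(p,\xi)$ lands on the sphere.
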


\begin{proof}
For $z\in\partial\mathbb B_d$, set $p_j=|z_j|^2$. Since $\sum\limits_{j=1}^d|z_j|^2=1$, we have $p=(p_1,\dots,p_d)\in\Delta_{d-1}$. If $z_j\ne0$, choose $\xi_j\in\R$ such that $z_j=|z_j|\e(\xi_j)$. If $z_j=0$, take $\xi_j=0$. Then $z_j=\sqrt{p_j}\e(\xi_j)$ for every $j$, and hence
\begin{align}\label{111}
z^\alpha=\prod\limits_{j=1}^d\bigl(\sqrt{p_j}\e(\xi_j)\bigr)^{\alpha_j}=p^{\frac\alpha2}\e(\xi\cdot\alpha).
\end{align}
By the definitions of $F_{N,d}$ and \eqref{111}, we have \(F_{N,d}(z)=S_N(p,\xi)\),
which implies that 
\[
\|F_{N,d}\|_\infty\leq\sup\limits_{\substack{p\in\Delta_{d-1}\\\xi\in\R^d}}|S_N(p,\xi)|.
\]
Conversely, for any $p\in\Delta_{d-1}$ and $\xi\in\R^d$, set $z_j=\sqrt{p_j}\e(\xi_j)$. Then $\sum\limits_{j=1}^d|z_j|^2=\sum\limits_{j=1}^dp_j=1$, so $z\in\partial\mathbb B_d$. 
By the definitions of $F_{N,d}$ and \eqref{111}, we obtain $S_N(p,\xi)=F_{N,d}(z)$, which implies that 
\[
\|F_{N,d}\|_\infty\geq\sup\limits_{\substack{p\in\Delta_{d-1}\\\xi\in\R^d}}|S_N(p,\xi)|.
\]
\end{proof}

\begin{lemma}\label{lem:weightfactorization}
Let $N\in\N$ and $p\in\Delta_{d-1}$. For $1\le j\le d$, set $\lambda_j=Np_j$. Then, for all $\alpha\in\Lambda_{N,d}$,
\begin{equation}
w_{N,p}(\alpha)=f_N(N)^{-1}\prod\limits_{j=1}^df_{\lambda_j}(\alpha_j).\label{eq:weightfactorization}
\end{equation}
\end{lemma}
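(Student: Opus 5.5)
This is a direct computation from the definitions. First treat the coordinates with $p_j>0$. Then $\lambda_j=Np_j>0$, so the discrete definition gives
\[
f_{\lambda_j}(\alpha_j)=\e^{-\frac{\lambda_j}{2}}\frac{\lambda_j^{\frac{\alpha_j}{2}}}{\sqrt{\alpha_j!}}.
\]
Here $f_{\lambda}$ is evaluated at nonnegative integers, and we use the discrete definition. By Lemma~\ref{lem:fsmooth} this agrees with the smooth extension whenever $\lambda\ge1$, so the choice of definition does not matter.

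Multiplying over all $j$ with $p_j>0$, and assuming for the moment that every $p_j>0$, we get
\[
\prod_{j=1}^d f_{\lambda_j}(\alpha_j)=\e^{-\frac12\sum_j\lambda_j}\,\frac{\prod_j (Np_j)^{\alpha_j/2}}{\sqrt{\alpha!}}=\e^{-\frac N2}N^{\frac N2}\frac{p^{\frac\alpha2}}{\sqrt{\alpha!}}.
\]
This uses $\sum_j\lambda_j=N\sum_jp_j=N$ and $\sum_j\alpha_j=|\alpha|=N$. Since $N\ge1$, we have $f_N(N)=\e^{-\frac N2}N^{\frac N2}/\sqrt{N!}$. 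Dividing by it leaves $\sqrt{N!/\alpha!}\,p^{\frac\alpha2}=w_{N,p}(\alpha)$, which is \eqref{eq:weightfactorization}.

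The only point that needs care is coordinates with $p_j=0$, so $\lambda_j=0$. For these the definition gives $f_0(\alpha_j)=1$ if $\alpha_j=0$ and $f_0(\alpha_j)=0$ otherwise. With the convention $0^0=1$, this is exactly $\e^{-0}0^{\alpha_j/2}/\sqrt{\alpha_j!}$. The same convention makes $p_j^{\alpha_j/2}$ equal to $1$ or $0$ in the same way. So the factor for such a $j$ is $1$ on both sides when $\alpha_j=0$, and both sides vanish when $\alpha_j\ge1$.

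Hence the general case follows by splitting the product into the indices with $p_j>0$ and those with $p_j=0$. When some $\alpha_j\ge1$ with $p_j=0$, both sides are $0$. Otherwise the computation above runs over the support of $p$ alone: the sums $\sum\lambda_j=N$ and $\sum\alpha_j=N$ are unchanged, because the omitted indices contribute zero to each. I expect no real obstacle beyond this bookkeeping of the degenerate coordinates.
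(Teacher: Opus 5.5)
Your proposal is correct and matches the paper's proof, which uses the same three cases: all $p_j>0$ by direct computation, both sides vanishing when some $p_j=0$ but $\alpha_j\ge1$, and otherwise the computation over the support of $p$ using $f_0(0)=1$. Your remark that the discrete and smooth definitions of $f_\lambda$ agree at nonnegative integers when $\lambda\ge1$ (Lemma~\ref{lem:fsmooth}) is a harmless extra clarification.
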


\begin{proof}
\noindent\textbf{Case 1:}  $p_j>0$ for all $j$.  
Since $\sum\limits_{j=1}^d\lambda_j=N$ and $|\alpha|=N$, we have
\[
f_N(N)^{-1}\prod\limits_{j=1}^df_{\lambda_j}(\alpha_j)=\left(\e^{-\frac N2}\frac{N^{\frac N2}}{\sqrt{N!}}\right)^{-1}\e^{-\frac N2}\frac{N^{\frac N2}p^{\frac\alpha2}}{\sqrt{\alpha!}}=\sqrt{\frac{N!}{\alpha!}}\,p^{\frac\alpha2}=w_{N,p}(\alpha).
\]
\medskip
\noindent\textbf{Case 2:} $p_j=0$ and $\alpha_j>0$ for some $j$.
In this case,  $p^\alpha=0$ and $f_{\lambda_j}(\alpha_j)=f_0(\alpha_j)=0$. Hence both sides of \eqref{eq:weightfactorization} are zero. 
\medskip
\noindent\textbf{Case 3:} $\alpha_j=0$ whenever $p_j=0$.
Since $f_0(0)=1$, $\sum\limits_{p_j>0}p_j=1$ and $\sum\limits_{p_j>0}\alpha_j=N$, we have
\[
\prod\limits_{j=1}^df_{\lambda_j}(\alpha_j)=\prod\limits_{p_j>0}\e^{-\frac{Np_j}{2}}\frac{(Np_j)^{\frac{\alpha_j}{2}}}{\sqrt{\alpha_j!}}=\e^{-\frac N2}\frac{N^{\frac N2}p^{\frac\alpha2}}{\sqrt{\alpha!}}.
\]
Dividing by $f_N(N)$ gives \eqref{eq:weightfactorization}.
\end{proof}

Let $d\ge2$, $N\ge d$ and $p\in\Delta_{d-1}$. For $1\le j\le d$, set $\lambda_j=Np_j$. After relabeling the coordinates, we assume that $\lambda_d=\max\limits_{1\le j\le d}\lambda_j$. Then
\begin{equation}
\lambda_d\ge\frac Nd\ge1.\label{eq:lambdad}
\end{equation}
Let $J=\{i<d:\lambda_i\ge1\}$ and $I=\{i<d:0\le\lambda_i<1\}$. Write $J=\{j_1,\dots,j_r\}$, where $r=|J|$ and $j_1<\dots<j_r$. For $\lambda\ge1$, let $f_\lambda$ denote the smooth extension given by Lemma \ref{lem:fsmooth}. For $k=(k_i)_{i\in I}\in\N_0^I$ and $\xi\in\R^d$, set $K=\sum\limits_{i\in I}k_i$ and define
\begin{equation}
T_k=\sum\limits_{n\in\Z^r}\Big(\prod\limits_{a=1}^rf_{\lambda_{j_a}}(n_a)\Big)f_{\lambda_d}\Big(N-K-\sum\limits_{a=1}^rn_a\Big)\e(\Phi_k(n)),\label{eq:Tk}
\end{equation}
where
\[
\begin{aligned}
\Phi_k(n)=\sqrt2\left(\sum\limits_{a=1}^rn_a^2+\sum\limits_{i\in I}k_i^2+\left(N-K-\sum\limits_{a=1}^rn_a\right)^2\right)+\sum\limits_{a=1}^r\xi_{j_a}n_a+\sum\limits_{i\in I}\xi_ik_i+\xi_d\Big(N-K-\sum\limits_{a=1}^rn_a\Big).
\end{aligned}
\]

\begin{lemma}\label{lem:Tk}
With the notation above,
there is a constant $A_*\ge1$,
independent of $d$, $N$, $p$, $k$, and $\xi$, such that
\begin{equation}\label{eq:Tkbound}
|T_k|\le A_*^{r+1}\lambda_d^{-\frac 14}
\end{equation}
for $N\ge d$, $p\in\Delta_{d-1}$ satisfying
\eqref{eq:lambdad}, $k\in\N_0^I$ and $\xi\in\R^d$.
\end{lemma}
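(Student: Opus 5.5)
The plan is to reduce $T_k$ to the tensor-product exponential sum of Proposition~\ref{prop:tensor-quant}, with $s=1$. The one factor that is not of tensor-product type is $f_{\lambda_d}(N-K-\sum_a n_a)$. I will remove it by Fourier inversion, so that it contributes only a linear phase in $n$.

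\textbf{Step 1: rescaled profiles.} For $\lambda\ge1$ the definition of $g_\lambda$ in Proposition~\ref{lem:profiles-intro} gives
\[
f_\lambda(x)=\lambda^{-\frac14}g_\lambda\Big(\frac{x-\lambda}{\sqrt\lambda}\Big).
\]
By Lemma~\ref{lem:fsmooth} the smooth extensions agree with the original $f_\lambda$ at nonnegative integers and vanish at negative integers. Hence the sum over all $n\in\Z^r$ in \eqref{eq:Tk} is legitimate and has no boundary issues.

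Proposition~\ref{lem:profiles-intro} with $K=2$ and $j\le2$ gives a constant $B\ge1$, independent of $\lambda$, such that
\[
\sup_{u}(1+|u|)^2|g_\lambda^{(j)}(u)|\le B \quad\text{for all }\lambda\ge1,\ 0\le j\le2.
\]
Integrating by parts twice then gives $|\widehat{g_\lambda}(\eta)|\le C B(1+|\eta|)^{-2}$. Consequently $\|\widehat{g_\lambda}\|_{L^1}\le C_1$ uniformly in $\lambda\ge1$.

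\textbf{Step 2: the phase.} Set $m=N-K-\sum_a n_a$. Expanding,
\[
\sum_a n_a^2+m^2=n^TA_{r,1}n-2(N-K)\textstyle\sum_a n_a+(N-K)^2 .
\]
Since $\sqrt2(N-K)^2$ and the $k$-terms do not depend on $n$, they contribute a unimodular constant. The remaining terms give
\[
\Phi_k(n)=\sqrt2\,n^TA_{r,1}n+\ell\cdot n+\text{const},
\qquad \ell_a=\xi_{j_a}-\xi_d-2\sqrt2(N-K).
\]

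\textbf{Step 3: Fourier inversion of the last factor.} Write
\[
f_{\lambda_d}(m)=\lambda_d^{-\frac14}\int_\R\widehat{g_{\lambda_d}}(\eta)\,\e\Big(\frac{(m-\lambda_d)\eta}{\sqrt{\lambda_d}}\Big)\,d\eta .
\]
Because $m$ is affine in $n$, the integrand contributes $\e\bigl(-\eta\lambda_d^{-1/2}\sum_a n_a\bigr)$ times a unimodular constant. Substituting this and interchanging sum and integral (justified by absolute convergence, thanks to the Schwartz decay of the $g_{\lambda_{j_a}}$) gives
\[
|T_k|\le\lambda_d^{-\frac14}\int_\R|\widehat{g_{\lambda_d}}(\eta)|\,
\Big|\sum_{n\in\Z^r}\prod_{a=1}^r f_{\lambda_{j_a}}(n_a)\,\e\bigl(\sqrt2\,n^TA_{r,1}n+\ell(\eta)\cdot n\bigr)\Big|\,d\eta ,
\]
where $\ell(\eta)_a=\ell_a-\eta/\sqrt{\lambda_d}$.

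\textbf{Step 4: the inner sum.} Write $f_{\lambda_{j_a}}(n_a)=\lambda_{j_a}^{-1/4}g_{\lambda_{j_a}}\bigl((n_a-c_a)/M_a\bigr)$ with $M_a=\sqrt{\lambda_{j_a}}\ge1$ (this uses $j_a\in J$) and $c_a=\lambda_{j_a}$. Proposition~\ref{prop:tensor-quant} bounds the inner sum by
\[
\prod_a\lambda_{j_a}^{-\frac14}\cdot(CB)^r\prod_a M_a^{\frac12}=(CB)^r,
\]
uniformly in $\eta$. Integrating against $|\widehat{g_{\lambda_d}}|$ gives
\[
|T_k|\le C_1(CB)^r\lambda_d^{-\frac14},
\]
so $A_*=\max\{C_1,CB,1\}$ works. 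When $r=0$ there is no sum and $T_k$ is a single term, so $|T_k|\le\lambda_d^{-1/4}\|g_{\lambda_d}\|_\infty$, which is already of the required form.

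\textbf{Main obstacle.} The key step is Step 3, decoupling the non-tensor factor $f_{\lambda_d}(N-K-\sum n_a)$. The Fourier integral works because Proposition~\ref{prop:tensor-quant} is uniform in the linear frequency $\ell$ and the $L^1$ norm of $\widehat{g_{\lambda_d}}$ is bounded uniformly in $\lambda_d$. It is essential that this costs only one factor of $A_*$, not a factor growing with $r$. The remaining points need only care: the constants must be independent of $d$, and the profiles must stay uniformly bounded even when some $\lambda_{j_a}$ are close to $1$.
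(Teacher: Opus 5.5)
Your proposal is correct and follows essentially the same route as the paper's proof. You write $f_\lambda$ through the profiles $g_\lambda$, rewrite $\Phi_k$ as $\sqrt2\,n^TA_{r,1}n$ plus the linear coefficients $\xi_{j_a}-\xi_d-2\sqrt2(N-K)$, remove $f_{\lambda_d}(N-K-\sum_a n_a)$ by Fourier inversion using a uniform bound on $\|\widehat{g_\lambda}\|_{L^1}$, apply Proposition~\ref{prop:tensor-quant} uniformly in the shifted frequency, and treat $r=0$ directly; the only differences are cosmetic, such as your $A_*=\max\{C_1,CB,1\}$ in place of the paper's $B_0\max\{5,C\}$, which also covers $r=0$ because $C\ge1$.
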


\begin{proof}
	By Proposition~\ref{lem:profiles-intro}, there exists $B_0\ge1$, independent of $\lambda$, such that
	\begin{equation}\label{eq:profile-fixed-order}
		\max_{0\le j\le2}\sup_{\lambda\ge1}\sup_{u\in\R}
		(1+|u|)^2|g_\lambda^{(j)}(u)|\le B_0.
	\end{equation}
	In particular,
	\[
	\|g_\lambda\|_{L^1(\R)},\ \|g_\lambda''\|_{L^1(\R)}
	\le B_0\int_\R(1+|u|)^{-2}\,du=2B_0.
	\]
	Therefore, for \(t\ne0\), by integration by parts, we have
	\[
	|\widehat g_\lambda(t)|
	\le
	\frac{\|g_\lambda''\|_{L^1(\R)}}{4\pi^2t^2}\leq \frac{B_0}{2\pi^2t^2}.
	\]
	Consequently,
	\begin{equation}\label{eq:profile-Fourier-L1}
	\begin{aligned}
		\|\widehat g_\lambda\|_{L^1(\R)}
		&=\int_{|t|\le1}|\widehat g_\lambda(t)|\,dt
		+\int_{|t|>1}|\widehat g_\lambda(t)|\,dt\\
		&\leq \int_{|t|\le1} \|g_\lambda\|_{L^1(\R)}\,dt
		+\int_{|t|>1}|\widehat g_\lambda(t)|\,dt\\
		&\le4B_0+\frac{B_0}{2\pi^2}\int_{|t|>1}t^{-2}\,dt\\
		&=\left(4+\frac1{\pi^2}\right)B_0
		\le5B_0.
	\end{aligned}
	\end{equation}
	
	For $r\ge1$, by the definition of $g_\lambda$,
	\[
	f_{\lambda_{j_a}}(n_a)
	=\lambda_{j_a}^{-\frac14}
	g_{\lambda_{j_a}}
	\left(\frac{n_a-\lambda_{j_a}}{\sqrt{\lambda_{j_a}}}\right),
	\qquad 1\le a\le r.
	\]
	Moreover, by Fourier inversion,
	\begin{align}\label{fd}
	\begin{aligned}
		f_{\lambda_d}\left(N-K-\sum_{a=1}^r n_a\right)
		&=\lambda_d^{-\frac14}
		g_{\lambda_d}\left(
		\frac{N-K-\lambda_d-\sum\limits_{a=1}^r n_a}{\sqrt{\lambda_d}}
		\right)\\
		&=\lambda_d^{-\frac14}
		\int_\R\widehat g_{\lambda_d}(t)
		\e\left(\frac{t(N-K-\lambda_d)}{\sqrt{\lambda_d}}
		-\frac{t}{\sqrt{\lambda_d}}\sum_{a=1}^r n_a\right)\,dt.
	\end{aligned}
	\end{align}
	By the definition of $\Phi_k$,
	\begin{equation}\label{eq:Phi}
		\begin{aligned}
			\Phi_k(n)
			=\sqrt2\,n^TA_{r,1}n
			+\sum_{a=1}^r
			\bigl(\xi_{j_a}-\xi_d-2\sqrt2(N-K)\bigr)n_a + \gamma_k,
		\end{aligned}
	\end{equation}
	where
	\[
	\gamma_k=\sqrt2\left((N-K)^2+\sum_{i\in I}k_i^2\right)
	+\xi_d(N-K)+\sum_{i\in I}\xi_i k_i.
	\]
	Substituting \eqref{eq:Phi} and \eqref{fd} into \eqref{eq:Tk}, we obtain
	\[
	\begin{aligned}
		T_k
		&=\e\left(\gamma_k\right)
		\lambda_d^{-\frac14}
		\left(\prod_{a=1}^r\lambda_{j_a}^{-\frac14}\right)
		\int_\R\widehat g_{\lambda_d}(t)
		\e\left(\frac{t(N-K-\lambda_d)}{\sqrt{\lambda_d}}\right)
		\sum_{n\in\Z^r}\prod_{a=1}^r
		g_{\lambda_{j_a}}
		\left(\frac{n_a-\lambda_{j_a}}{\sqrt{\lambda_{j_a}}}\right)\\
		&\qquad\qquad\times
		\e\left(\sqrt2\,n^TA_{r,1}n+
		\sum_{a=1}^r
		\left(\xi_{j_a}-\xi_d-2\sqrt2(N-K)
		-\frac{t}{\sqrt{\lambda_d}}\right)n_a\right)\,dt.
	\end{aligned}
	\]
	Since $\widehat g_{\lambda_d}\in L^1(\mathbb R)$ and
	$g_{\lambda_{j_a}}\in\mathcal S(\mathbb R)$,
	the interchange of summation and integration is justified
	by absolute convergence.
	By \eqref{eq:profile-fixed-order} and
	Proposition~\ref{prop:tensor-quant}, for every $t\in\R$,
	\[
	\begin{aligned}
		&\left|\sum_{n\in\Z^r}\prod_{a=1}^r
		g_{\lambda_{j_a}}
		\left(\frac{n_a-\lambda_{j_a}}{\sqrt{\lambda_{j_a}}}\right)
		\e\left(\sqrt2\,n^TA_{r,1}n+
		\sum_{a=1}^r
		\left(\xi_{j_a}-\xi_d-2\sqrt2(N-K)
		-\frac{t}{\sqrt{\lambda_d}}\right)n_a\right)\right|\\
		\le&(CB_0)^r\prod_{a=1}^r\lambda_{j_a}^{\frac14},
	\end{aligned}
	\]
	where $C$ is the constant in Proposition~\ref{prop:tensor-quant}
	and is independent of $r$ and $t$.
	Therefore, by \eqref{eq:profile-Fourier-L1},
	\begin{equation*}
		\begin{aligned}
			|T_k|
			\le\lambda_d^{-\frac14}
			\left(\prod_{a=1}^r\lambda_{j_a}^{-\frac14}\right)
			\int_\R|\widehat g_{\lambda_d}(t)|
			(CB_0)^r\prod_{a=1}^r\lambda_{j_a}^{\frac14}\,dt
			\le 5B_0(CB_0)^r\lambda_d^{-\frac14}.
		\end{aligned}
	\end{equation*}
	For $r=0$, by \eqref{eq:Tk} and
	\eqref{eq:profile-fixed-order},
	\[
	|T_k|
	=|f_{\lambda_d}(N-K)|
	=\lambda_d^{-\frac14}
	\left|g_{\lambda_d}
	\left(\frac{N-K-\lambda_d}{\sqrt{\lambda_d}}\right)\right|
	\le B_0\lambda_d^{-\frac14}.
	\]
	Taking $A_*=B_0\max\{5,C\}$ gives \eqref{eq:Tkbound} in both cases.
\end{proof}

\section{Proof of Theorem~\ref{thm:mainK}}\label{sec:proof}
We first construct the polynomials $p_{N,j}$ in the Hardy space.
For $d\geq2$ and $N\geq0$, set
\[
I_{N+d-1,d-1}=\Big\{(\beta_1,\dots,\beta_{d-1})\in\Z^{d-1}:0\le\beta_1<\dots<\beta_{d-1}\le N+d-2\Big\}.
\]
\begin{lemma}\label{lem:index-bijection}
	For $d\geq2$ and $N\geq0$, the map
	$\beta:\Lambda_{N,d}\to I_{N+d-1,d-1}$ defined by
	\begin{equation*}
		\beta_j(\alpha)=\sum\limits_{k=1}^j\alpha_k+j-1,
		\ 1\leq j\leq d-1
	\end{equation*}
	is a bijection.
\end{lemma}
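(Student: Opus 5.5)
The plan is the classical stars-and-bars argument: check that $\beta$ lands in $I_{N+d-1,d-1}$, write down an explicit inverse, and verify that the two maps are mutually inverse.

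\emph{Well-definedness.} Let $\alpha\in\Lambda_{N,d}$. Since $\alpha_1\ge0$, we have $\beta_1(\alpha)=\alpha_1\ge0$. For $2\le j\le d-1$,
\[
\beta_j(\alpha)-\beta_{j-1}(\alpha)=\alpha_j+1\ge1,
\]
so the entries are strictly increasing. For the top entry,
\[
\beta_{d-1}(\alpha)=\sum_{k=1}^{d-1}\alpha_k+d-2=N-\alpha_d+d-2\le N+d-2,
\]
because $\alpha_d\ge0$. Hence $\beta(\alpha)\in I_{N+d-1,d-1}$.

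\emph{Inverse map.} Given $\beta\in I_{N+d-1,d-1}$, define $\alpha=\alpha(\beta)$ by
\[
\alpha_1=\beta_1,\qquad \alpha_j=\beta_j-\beta_{j-1}-1\quad(2\le j\le d-1),\qquad \alpha_d=N+d-2-\beta_{d-1}.
\]
Each entry is a nonnegative integer, using respectively $\beta_1\ge0$, the strict inequality $\beta_j>\beta_{j-1}$ between integers, and $\beta_{d-1}\le N+d-2$. The sum telescopes:
\[
\sum_{k=1}^{d}\alpha_k=\beta_{d-1}-(d-2)+\bigl(N+d-2-\beta_{d-1}\bigr)=N,
\]
so $\alpha\in\Lambda_{N,d}$.

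\emph{Mutual inverses.}
\begin{itemize}
\item[(a)] For $\beta\circ\alpha=\mathrm{id}$: the partial sums satisfy $\sum_{k\le j}\alpha_k=\beta_j-(j-1)$ by telescoping, hence $\beta_j(\alpha(\beta))=\beta_j$.
\item[(b)] For $\alpha\circ\beta=\mathrm{id}$: $\alpha_1$ is recovered as $\beta_1$, and each $\alpha_j$ with $2\le j\le d-1$ as $\beta_j-\beta_{j-1}-1$. The last coordinate is recovered from $N+d-2-\beta_{d-1}=\alpha_d$, which uses $|\alpha|=N$.
\end{itemize}

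No step is a genuine obstacle. The only point needing care is the boundary case $d=2$, where the middle range $2\le j\le d-1$ is empty and the monotonicity condition is vacuous. There $\beta(\alpha)=\alpha_1\in\{0,\dots,N\}$ and the inverse is $\alpha=(\beta_1,N-\beta_1)$, so the argument above goes through unchanged.
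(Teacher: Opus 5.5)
Your proof is correct and follows essentially the same route as the paper: verify that $\beta(\alpha)\in I_{N+d-1,d-1}$, write down the explicit inverse $\alpha_1=\beta_1$, $\alpha_j=\beta_j-\beta_{j-1}-1$, $\alpha_d=N+d-2-\beta_{d-1}$, and check both compositions (the paper phrases your part (b) as injectivity, obtained by applying the inverse formula to both sides). Your explicit check that the boundary case $d=2$ goes through is a small extra the paper leaves implicit.
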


\begin{proof}
	For $\alpha\in\Lambda_{N,d}$, we have
	\[
	\beta_1(\alpha)=\alpha_1\geq0,\qquad
	\beta_j(\alpha)-\beta_{j-1}(\alpha)=\alpha_j+1\geq1,
	\  2\leq j\leq d-1,
	\]
	and
	\[
	\beta_{d-1}(\alpha)=N-\alpha_d+d-2\leq N+d-2.
	\]
	Thus $\beta(\alpha)\in I_{N+d-1,d-1}$.
	Conversely, for $\beta\in I_{N+d-1,d-1}$, define
	\begin{equation}\label{eq:alpha}
		\alpha_j=
		\begin{cases}
			\beta_1,&j=1,\\
			\beta_j-\beta_{j-1}-1,&2\leq j\leq d-1,\\
			N+d-2-\beta_{d-1},&j=d.
		\end{cases}
	\end{equation}
	Then $\alpha\in\Lambda_{N,d}$ and for $1\leq j\leq d-1$, \(\sum\limits_{k=1}^j\alpha_k + j-1=\beta_j\).
	Thus \eqref{eq:alpha} gives a preimage of every $\beta$.
	To prove injectivity, suppose that $\beta(\alpha)=\beta(\alpha')$.
	Applying \eqref{eq:alpha} to both sides yields
	$\alpha=\alpha'$.
\end{proof}

 For each $\rho\in I_{N+d-1,d-1}$, define
\begin{equation}
	p_{N,\rho}(z)=\frac{\sqrt{D_{N,d}}}{(N+d-1)^{\frac{d-1}{2}}}\sum\limits_{\alpha\in\Lambda_{N,d}}\det\left[\e\left(\frac{\rho_i\left(\sum\limits_{k=1}^j\alpha_k+j-1\right)}{N+d-1}\right)\right]_{i,j=1}^{d-1}\e\left(\sqrt2\sum\limits_{k=1}^d\alpha_k^2\right)\sqrt{\frac{N!}{\alpha!}}\,z^\alpha.\label{eq:Pdefinition}
\end{equation}
Since $|I_{N+d-1,d-1}|=D_{N,d}$, we enumerate these polynomials as
$\{p_{N,j}:1\le j\le D_{N,d}\}$. Define
\begin{align}\label{FNd}
	F_{N,d}(z)=\sum\limits_{\alpha\in\Lambda_{N,d}}\e\left(\sqrt2\sum\limits_{j=1}^d\alpha_j^2\right)\sqrt{\frac{N!}{\alpha!}}\,z^\alpha.
\end{align}

\begin{lemma}\label{lem:orthonormal}
	The polynomials $\{p_{N,\rho}\}_{\rho\in I_{N+d-1,d-1}}$ defined in \eqref{eq:Pdefinition} form an orthonormal basis of $\PP_N^{(d)}$. Moreover, for \(\rho\in I_{N+d-1,d-1}\),
	\[
	\|p_{N,\rho}\|_\infty\le\sqrt{(d-1)!}\,\|F_{N,d}\|_\infty.
	\]
\end{lemma}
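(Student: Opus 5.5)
The plan is to work in the orthonormal basis of normalized monomials and reduce everything to properties of the discrete Fourier matrix. In $H^2(\partial\mathbb B_d)$ the monomials are orthogonal, with $\|z^\alpha\|^2=\frac{(d-1)!\,\alpha!}{(N+d-1)!}$. Hence
\[
e_\alpha=\sqrt{\frac{(N+d-1)!}{(d-1)!\,\alpha!}}\,z^\alpha ,\qquad \alpha\in\Lambda_{N,d},
\]
is an orthonormal basis of $\PP_N^{(d)}$. Since $D_{N,d}=\frac{(N+d-1)!}{N!\,(d-1)!}$, we have $\sqrt{D_{N,d}}\sqrt{N!/\alpha!}\,z^\alpha=e_\alpha$. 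Therefore \eqref{eq:Pdefinition} reads
\[
p_{N,\rho}=\sum_{\alpha\in\Lambda_{N,d}} U_{\rho,\alpha}\,\e\Big(\sqrt2\sum_k\alpha_k^2\Big)e_\alpha,
\qquad
U_{\rho,\alpha}=\det\big[W_{\rho_i,\beta_j(\alpha)}\big]_{i,j=1}^{d-1},
\]
where $W=\big((N+d-1)^{-1/2}\e(ab/(N+d-1))\big)_{a,b=0}^{N+d-2}$ is the unitary DFT matrix. The factor $(N+d-1)^{-(d-1)/2}$ is exactly absorbed by the $d-1$ rows of the minor.

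By Lemma~\ref{lem:index-bijection}, $\alpha\mapsto\beta(\alpha)$ identifies $\Lambda_{N,d}$ with $I_{N+d-1,d-1}$. Under this identification, $U$ is the $(d-1)$-th compound matrix $C_{d-1}(W)$, with rows and columns indexed by $(d-1)$-subsets. The Cauchy--Binet formula gives $C_{d-1}(W)C_{d-1}(W^*)=C_{d-1}(WW^*)=I$ and $C_{d-1}(W^*)=C_{d-1}(W)^*$, so $U$ is a $D_{N,d}\times D_{N,d}$ unitary matrix. The diagonal phase $\e(\sqrt2\sum\alpha_k^2)$ is unitary, so the coefficient matrix of $\{p_{N,\rho}\}$ relative to $\{e_\alpha\}$ is unitary. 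This gives orthonormality, and completeness follows by counting dimensions.

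For the sup bound, I would expand the determinant via the Leibniz formula over $\sigma\in S_{d-1}$:
\[
U_{\rho,\alpha}=(N+d-1)^{-\frac{d-1}{2}}\sum_\sigma \operatorname{sgn}\sigma\ \e\Big(\sum_{j}\frac{\rho_{\sigma(j)}\beta_j(\alpha)}{N+d-1}\Big).
\]
The phase is affine in $\alpha$: $\sum_j\rho_{\sigma(j)}(\sum_{k\le j}\alpha_k+j-1)=\sum_{k=1}^{d-1}\theta_k\alpha_k+c_\sigma$, with $\theta_k=\sum_{j\ge k}\rho_{\sigma(j)}/(N+d-1)$ and $c_\sigma$ a constant. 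Multiplying $z^\alpha$ by $\e(\sum_k\theta_k\alpha_k)$ is the same as replacing $z$ by the coordinate rotation $z\mapsto(\e(\theta_1)z_1,\dots,\e(\theta_{d-1})z_{d-1},z_d)$, which preserves $\partial\mathbb B_d$. Thus
\[
p_{N,\rho}(z)=\sqrt{D_{N,d}}\,(N+d-1)^{-\frac{d-1}{2}}\sum_\sigma\operatorname{sgn}\sigma\,\e(c_\sigma)F_{N,d}(R_\sigma z),
\]
and so
\[
\|p_{N,\rho}\|_\infty\le (d-1)!\,\sqrt{D_{N,d}}\,(N+d-1)^{-\frac{d-1}{2}}\|F_{N,d}\|_\infty .
\]
We have $D_{N,d}\le (N+d-1)^{d-1}/(d-1)!$, since $\binom{n}{k}\le n^k/k!$. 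Hence the prefactor is at most $(d-1)!/\sqrt{(d-1)!}=\sqrt{(d-1)!}$, which is exactly the claimed bound.

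The steps are routine. The main point to get right is the bookkeeping: checking that the compound-matrix indexing via $\beta(\alpha)$ matches the column subsets, so that Cauchy--Binet applies cleanly, and checking the constant in $D_{N,d}\le (N+d-1)^{d-1}/(d-1)!$.
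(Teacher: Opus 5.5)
Your proposal is correct and follows the paper's proof essentially step by step. You realize the coefficients as $(d-1)$-minors of the unitary DFT matrix and use Cauchy--Binet (the paper applies it directly rather than through compound-matrix language), count dimensions, and expand by Leibniz into $(d-1)!$ coordinate rotations of $F_{N,d}$ with $D_{N,d}\le (N+d-1)^{d-1}/(d-1)!$; your explicit check that $\sqrt{D_{N,d}}\sqrt{N!/\alpha!}\,z^\alpha$ are the normalized monomials supplies a step the paper only asserts.
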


\begin{proof}
	Let $\mathbf F$ be the $(N+d-1)\times(N+d-1)$ matrix defined by
	\[
	\mathbf F_{a,b}=(N+d-1)^{-\frac 12}\e\left(\frac{ab}{N+d-1}\right),\ 0\le a,b\le N+d-2.
	\]
	It is easy to verify that $\mathbf F$ is unitary. For $\rho,\beta\in I_{N+d-1,d-1}$, define
	\begin{equation}
		U_{\rho,\beta}=\det[\mathbf F_{\rho_i,\beta_j}]_{i,j=1}^{d-1}=(N+d-1)^{-\frac{d-1}{2}}\det\left[\e\left(\frac{\rho_i\beta_j}{N+d-1}\right)\right]_{i,j=1}^{d-1}.\label{eq:Udef}
	\end{equation}
	By the Cauchy--Binet formula \cite[p.~28]{HornJohnson2013},
	\begin{equation}
		\sum\limits_{\beta\in I_{N+d-1,d-1}}U_{\rho,\beta}\overline{U_{\rho',\beta}}=\det[(\mathbf F\mathbf F^*)_{\rho_i,\rho'_j}]_{i,j=1}^{d-1}=\det[\delta_{\rho_i\rho'_j}]_{i,j=1}^{d-1}=\delta_{\rho\rho'}.\label{eq:CauchyBinet}
	\end{equation}
	Set
	\[
	E_\alpha(z)=\sqrt{D_{N,d}}\sqrt{\frac{N!}{\alpha!}}\,z^\alpha,\qquad \omega_\alpha=\e\Big(\sqrt2\sum\limits_{j=1}^d\alpha_j^2\Big).
	\]
	By \eqref{eq:Pdefinition} and \eqref{eq:Udef},
	\begin{equation}
		p_{N,\rho}(z)=\sum\limits_{\alpha\in\Lambda_{N,d}}U_{\rho,\beta(\alpha)}\omega_\alpha E_\alpha(z).\label{eq:Pmonomial}
	\end{equation}
	Since the monomials $E_\alpha$ are orthonormal and $|\omega_\alpha|=1$, by \eqref{eq:CauchyBinet}, we have
	\[
	\langle p_{N,\rho},p_{N,\rho'}\rangle=\sum\limits_{\alpha\in\Lambda_{N,d}}U_{\rho,\beta(\alpha)}\overline{U_{\rho',\beta(\alpha)}}\overset{(*)}{=}\sum\limits_{\beta\in I_{N+d-1,d-1}}U_{\rho,\beta}\overline{U_{\rho',\beta}}=\delta_{\rho\rho'},
	\]
	where \((*)\) follows from the bijectivity of $\alpha\mapsto\beta(\alpha)$. Moreover,
	\[
	|I_{N+d-1,d-1}|=\binom{N+d-1}{d-1}=\dim\PP_N^{(d)}.
	\]
	Therefore, $\{p_{N,\rho}\}_{\rho\in I_{N+d-1,d-1}}$ form an orthonormal basis of $\PP_N^{(d)}$.
	
	It remains to estimate $\|p_{N,\rho}\|_\infty$. Let $\mathfrak  S_{d-1}$ denote the set of permutations of $\{1,\dots,d-1\}$. For $\pi\in \mathfrak S_{d-1}$, set
	\[
	c_{\rho,\pi}=\frac{1}{N+d-1}\sum\limits_{i=1}^{d-1}\rho_i(\pi(i)-1),
	\]
	and
	\[
	t_{\rho,\pi,k}=\frac{1}{N+d-1}\sum\limits_{\substack{1\le i\le d-1\\\pi(i)\ge k}}\rho_i,\ 1\le k\le d-1.
	\]
	Then, by Lemma~\ref{lem:index-bijection},
	\[
	\frac{1}{N+d-1}\sum\limits_{i=1}^{d-1}\rho_i\beta_{\pi(i)}(\alpha)=\frac{1}{N+d-1}\sum\limits_{i=1}^{d-1}\rho_i\left(\sum\limits_{k=1}^{\pi(i)}\alpha_k+\pi(i)-1\right)=c_{\rho,\pi}+\sum\limits_{k=1}^{d-1}t_{\rho,\pi,k}\alpha_k.
	\]
	Therefore, by \eqref{eq:Udef}, we have
	\begin{equation}
		\begin{aligned}
			U_{\rho,\beta(\alpha)}&=(N+d-1)^{-\frac {d-1}{2}}\sum\limits_{\pi\in\mathfrak  S_{d-1}}\operatorname{sgn}(\pi)\e\left(\frac{1}{N+d-1}\sum\limits_{i=1}^{d-1}\rho_i\beta_{\pi(i)}(\alpha)\right)\\
			&=(N+d-1)^{-\frac {d-1}{2}}\sum\limits_{\pi\in\mathfrak  S_{d-1}}\operatorname{sgn}(\pi)\e\left(c_{\rho,\pi}+\sum\limits_{k=1}^{d-1}t_{\rho,\pi,k}\alpha_k\right).
		\end{aligned}\label{eq:Uexpanded}
	\end{equation}
	We insert \eqref{eq:Uexpanded} and the expression for $E_\alpha$
	into \eqref{eq:Pmonomial} and use \eqref{FNd} to obtain
	\begin{equation*}
		\begin{aligned}
			\|p_{N,\rho}\|_\infty
			&=\frac{\sqrt{D_{N,d}}}{(N+d-1)^{\frac {d-1}{2}}}
			\sup\limits_{z\in \partial\mathbb B_d}\left|
			\sum\limits_{\pi\in\mathfrak  S_{d-1}}\operatorname{sgn}(\pi)
			\e(c_{\rho,\pi})
			F_{N,d}\big(\e(t_{\rho,\pi,1})z_1,\dots,
			\e(t_{\rho,\pi,d-1})z_{d-1},z_d\big)\right|\\
			&\leq\frac{\sqrt{D_{N,d}}\,(d-1)!}
			{(N+d-1)^{\frac {d-1}{2}}}\|F_{N,d}\|_\infty\\
			&\leq\sqrt{(d-1)!}\,\|F_{N,d}\|_\infty,
		\end{aligned}
	\end{equation*}
	where the last inequality follows from
	\[
	\binom{N+d-1}{d-1}
	=\frac{(N+d-1)(N+d-2)\cdots(N+1)}{(d-1)!}
	\leq\frac{(N+d-1)^{d-1}}{(d-1)!}.
	\]
\end{proof}
	By Stirling's formula \cite{Olver1997}, there exists a constant
	$C_{\mathrm{St}}>0$ such that \(N!\le C_{\mathrm{St}}\sqrt N\left(\frac Ne\right)^N\), \(N\ge1\).
	Hence
	\begin{align}\label{fnxia}
	f_N(N)=\left(\frac{\e^{-N}N^N}{N!}\right)^{\frac 12}
	\ge C_{\mathrm{St}}^{-\frac 12}N^{-\frac14}.
	\end{align}
\begin{proposition}\label{prop:seed}
Let $A_*$ be the constant in Lemma~\ref{lem:Tk}. 
Set
\[
A=\max\limits\left\{2,\max\limits\left\{1,\sqrt{C_{\mathrm{St}}}\right\}
\max\limits\left\{A_*,\sum\limits_{m=0}^{\infty}\frac1{\sqrt{m!}}\right\}\right\}.
\]
Then $A$ is independent of $d$ and $N$, and for every $d,N\in\N$,
\[
\|F_{N,d}\|_\infty\le A^d d^{\frac 14}.
\]
\end{proposition}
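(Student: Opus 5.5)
By Lemma~\ref{lem:Fsup} it suffices to bound $|S_N(p,\xi)|$ uniformly in $p\in\Delta_{d-1}$ and $\xi\in\R^d$. I would split into the cases $N<d$ and $N\ge d$.

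\textbf{Small degree, $N<d$.} Here I would not use any cancellation and would simply bound $|S_N(p,\xi)|\le\sum_{\alpha}w_{N,p}(\alpha)$. By Cauchy--Schwarz and \eqref{eq:multinomial}, this gives $|S_N|\le D_{N,d}^{1/2}$. Since $N<d$, we have $D_{N,d}\le\binom{2d-2}{d-1}\le 4^{d}$, so $|S_N|\le 2^d\le A^d$, using $A\ge2$.

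\textbf{Main case, $N\ge d$.} Relabel the coordinates so that $\lambda_d=\max_j\lambda_j$; then \eqref{eq:lambdad} holds. Lemma~\ref{lem:weightfactorization} gives $w_{N,p}(\alpha)=f_N(N)^{-1}\prod_j f_{\lambda_j}(\alpha_j)$. Split the first $d-1$ coordinates into $J$ and $I$ as before Lemma~\ref{lem:Tk}. For each fixed $k=(\alpha_i)_{i\in I}$, sum over $(\alpha_{j_a})_a=n\in\N_0^r$, with $\alpha_d=N-K-\sum n_a$ determined.

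Because the smooth extensions of Lemma~\ref{lem:fsmooth} vanish at negative integers, the sum over $n$ with $\alpha_d\ge0$ may be replaced by the full sum over $n\in\Z^r$. This requires a small check for $\lambda_{j_a}\ge1$, and for $\lambda_d$ when $N-K-\sum n_a<0$. This yields
\[
S_N(p,\xi)=f_N(N)^{-1}\sum_{k\in\N_0^I}\Big(\prod_{i\in I}f_{\lambda_i}(k_i)\Big)T_k ,
\]
with $T_k$ as in \eqref{eq:Tk}. The terms with $K>N$ vanish automatically, since then $f_{\lambda_d}$ and the $f_{\lambda_{j_a}}$ are evaluated at a negative integer for every $n$.

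Now apply Lemma~\ref{lem:Tk}, which gives $|T_k|\le A_*^{r+1}\lambda_d^{-1/4}$. For $i\in I$ we have $0\le\lambda_i<1$, so $f_{\lambda_i}(k)\le 1/\sqrt{k!}$; this also holds for $\lambda_i=0$. Hence $\sum_{k}\prod_{i\in I}f_{\lambda_i}(k_i)\le\big(\sum_m 1/\sqrt{m!}\big)^{|I|}$. Combining these with \eqref{fnxia} gives
\[
|S_N|\le \sqrt{C_{\mathrm{St}}}\,N^{1/4}\,A_*^{r+1}\Big(\sum_m\tfrac1{\sqrt{m!}}\Big)^{|I|}\lambda_d^{-1/4}.
\]
Since $\lambda_d\ge N/d$, we get $(N/\lambda_d)^{1/4}\le d^{1/4}$. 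Also $r+1+|I|=d$, so, by the definition of $A$, the whole bound is at most $A^{d}d^{1/4}$.

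\textbf{Expected obstacle.} The main delicate step is the bookkeeping in the reduction to $T_k$. One must check that extending the $n$-summation to all of $\Z^r$ introduces no spurious terms, i.e.\ that every smooth extension $f_\lambda$ is used only with $\lambda\ge1$ and vanishes at negative integers. One must also check that the phase matches $\Phi_k$ exactly, which it does since $\alpha_d=N-K-\sum n_a$. The constant bookkeeping is routine: the single factor $\sqrt{C_{\mathrm{St}}}$ is absorbed into $A^d$ because $A\ge\max\{1,\sqrt{C_{\mathrm{St}}}\}\max\{\cdots\}$ and the exponents add up to $d$.
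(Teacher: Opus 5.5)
Your proposal is correct and follows the paper's proof essentially step for step: Cauchy--Schwarz with \eqref{eq:multinomial} for $1\le N<d$; for $N\ge d$, the factorization of Lemma~\ref{lem:weightfactorization}, the decomposition $S_N=f_N(N)^{-1}\sum_k\bigl(\prod_{i\in I}f_{\lambda_i}(k_i)\bigr)T_k$, Lemma~\ref{lem:Tk}, \eqref{fnxia}, and the bound $f_{\lambda_i}(k)\le 1/\sqrt{k!}$ for $\lambda_i<1$. The only omission is $d=1$, which falls into your case $N\ge d$ although the setup before Lemma~\ref{lem:Tk} assumes $d\ge2$; the paper instead treats it directly, since $F_{N,1}(z)=\e(\sqrt2\,N^2)z^N$ gives $\|F_{N,1}\|_\infty=1$.
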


\begin{proof}
If $d=1$, then $F_{N,1}(z)=\e(\sqrt2\,N^2)z^N$, and hence $\|F_{N,1}\|_\infty=1$. 
Fix $p\in\Delta_{d-1}$ and $\xi\in\mathbb R^d$.
For $1\le N<d$, by the Cauchy--Schwarz inequality and \eqref{eq:multinomial},
\begin{equation}
|S_N(p,\xi)|\le D_{N,d}^{\frac12}\left(\sum\limits_{\alpha\in\Lambda_{N,d}}w_{N,p}(\alpha)^2\right)^{\frac12}=\sqrt{D_{N,d}}\le\sqrt{\binom{2d-2}{d-1}} \le2^{d-1}.\label{eq:lowdegree}
\end{equation}
Suppose that $N\geq d$.
Set $\lambda_j=Np_j$ for $1\leq j\leq d$.
After relabeling the coordinates, assume that
$\lambda_d=\max\limits_{1\leq j\leq d}\lambda_j$,
and use the notation introduced before Lemma~\ref{lem:Tk}. 
For each $k\in\N_0^I$, let
\[
\Lambda_{N,d}(k)
=\{\alpha\in\Lambda_{N,d}:\alpha_i=k_i\text{ for }i\in I\},
\qquad
\Delta_{N-K}^{(r)}
=\left\{n\in\N_0^r:\sum_{a=1}^r n_a\le N-K\right\}.
\]
Since $N=|\alpha|=\sum\limits_{i\in I}\alpha_i+\sum\limits_{a=1}^r\alpha_{j_a}+\alpha_d$, the map $\alpha\mapsto(\alpha_{j_1},\dots,\alpha_{j_r})$ is a bijection from $\Lambda_{N,d}(k)$ onto $\Delta_{N-K}^{(r)}$, with inverse given by
\begin{equation}\label{eq:inverse}
	\alpha_i=
	\begin{cases}
		k_i, & i\in I,\\
		n_a, & i=j_a,\ 1\le a\le r,\\
		N-K-\displaystyle\sum_{a=1}^r n_a, & i=d.
	\end{cases}
\end{equation}
Since $\lambda_{j_a}\ge1$ and $\lambda_d\ge1$, by Lemma \ref{lem:fsmooth} and \eqref{eq:inverse}, we have
\begin{equation}
\begin{aligned}
&T_k\\
=&\sum\limits_{n \in \Delta_{N-K}^{(r)}}\left(\prod\limits_{a=1}^rf_{\lambda_{j_a}}(n_a)\Big)f_{\lambda_d}\Big(N-K-\sum\limits_{a=1}^rn_a\right)\e(\Phi_k(n))\\
=&\sum\limits_{\alpha \in \Lambda_{N,d}(k)}\left(\prod\limits_{a=1}^rf_{\lambda_{j_a}}(\alpha_{j_a})\right)f_{\lambda_d}(\alpha_d) \e\left(\sqrt2\left[\sum\limits_{a=1}^r\alpha_{j_a}^2+\sum\limits_{i\in I}\alpha_i^2+\alpha_d^2\right]+\sum\limits_{a=1}^r\xi_{j_a}\alpha_{j_a}+\sum\limits_{i\in I}\xi_i\alpha_i+\xi_d\alpha_d\right)\\
=&\sum\limits_{\alpha \in \Lambda_{N,d}(k)}\left(\prod\limits_{j\notin I}f_{\lambda_j}(\alpha_j)\right)\e\left(\sqrt2\sum\limits_{j=1}^d\alpha_j^2+\xi\cdot\alpha\right).
\end{aligned}\label{eq:Tkrewritten}
\end{equation}
By \eqref{eq:SN}, Lemma \ref{lem:weightfactorization},  \(\Lambda_{N,d}=\bigsqcup\limits_{k\in\N_0^I}\Lambda_{N,d}(k)\) and \eqref{eq:Tkrewritten}, we have
\begin{equation}\label{eq:SNdecomposition}
	\begin{aligned}
		S_N(p,\xi)
		&=f_N(N)^{-1}\sum\limits_{\alpha\in\Lambda_{N,d}}
		\left(\prod\limits_{j=1}^df_{\lambda_j}(\alpha_j)\right)
		\e\left(\sqrt2\sum\limits_{j=1}^d\alpha_j^2+\xi\cdot\alpha\right)\\
		&=f_N(N)^{-1}\sum\limits_{k\in\N_0^I}
		\left(\prod\limits_{i\in I}f_{\lambda_i}(k_i)\right)
		\sum\limits_{\alpha \in \Lambda_{N,d}(k)}
		\left(\prod\limits_{j\notin I}f_{\lambda_j}(\alpha_j)\right)
		\e\left(\sqrt2\sum\limits_{j=1}^d\alpha_j^2+\xi\cdot\alpha\right)\\
		&=f_N(N)^{-1}\sum\limits_{k\in\N_0^I}
		\left(\prod\limits_{i\in I}f_{\lambda_i}(k_i)\right)T_k.
	\end{aligned}
\end{equation}
Moreover, when $K>N$,
$\Lambda_{N,d}(k) = \emptyset$, and hence $T_k=0$.
Thus the sum in \eqref{eq:SNdecomposition} is finite. 
Therefore, by \eqref{fnxia} and Lemma~\ref{lem:Tk},
\begin{equation}\label{eq:SNbound}
\begin{aligned}
|S_N(p,\xi)|
&\le \sqrt{C_{\mathrm{St}}}N^{\frac14}A_*^{r+1}\lambda_d^{-\frac14}
\prod\limits_{i\in I}\left(\sum\limits_{k_i=0}^\infty f_{\lambda_i}(k_i)\right)\\
& \le \sqrt{C_{\mathrm{St}}}N^{\frac14}A_*^{r+1}\lambda_d^{-\frac14}
\prod_{i\in I}\left(
\e^{-\frac{\lambda_i}{2}}
\sum_{k_i=0}^{\infty}\frac{\lambda_i^{\frac{k_i}{2}}}{\sqrt{k_i!}}
\right)\\
&\le \sqrt{C_{\mathrm{St}}}A_*^{r+1}\left(\frac{N}{\lambda_d}\right)^{\frac14}
\left(\sum_{k=0}^{\infty}\frac1{\sqrt{k!}}\right)^{|I|}\\
&\overset{(*)}{\le} \sqrt{C_{\mathrm{St}}}\max\limits\left\{A_*,\sum\limits_{m=0}^{\infty}\frac1{\sqrt{m!}}\right\}^{d}d^{\frac14}\leq A^d d^{\frac14},
\end{aligned}
\end{equation}
where \((*)\) follows from  $r+1+|I|=d$ and \eqref{eq:lambdad}.
Combining \eqref{eq:SNbound} with \eqref{eq:lowdegree}, by Lemma~\ref{lem:Fsup}, the result holds.
\end{proof}

\begin{proof}[Proof of Theorem~\ref{thm:mainK}]
		Let $A\ge1$ be the constant in
		Proposition~\ref{prop:seed}.
		
		\noindent\textit{\((i)\Rightarrow(ii)\).}
		For $d\ge2$ and $m\ge1$, Lemma~\ref{lem:orthonormal} and
		Proposition~\ref{prop:seed} give an orthonormal basis
		$\{p_{m,k}:1\le k\le D_{m,d}\}$ of $\mathcal P_m^{(d)}$
		with respect to the $H^2(\partial\mathbb B_d)$ inner product.
		For $d=1$, take $p_{m,1}(z)=z^m$, and in every dimension
		take $p_{0,1}=1$. Then
		\begin{align}\label{hardy}
		\sup_{m\ge0}\sup_{1\le k\le D_{m,d}}
		\|p_{m,k}\|_\infty
		\le A^d d^{\frac14}\sqrt{(d-1)!}.
		\end{align}
		Define
		$e_{m,k}(z)=\sqrt{\frac{a_m}{D_{m,d}}}\,p_{m,k}(z)$.
		Then
		\[
		\begin{aligned}
			\langle e_{m,i},e_{m,j}\rangle_H
			=\frac{a_m}{D_{m,d}}
			\langle p_{m,i},p_{m,j}\rangle_H
			=\frac{a_m}{D_{m,d}}\cdot\frac{D_{m,d}}{a_m}
			\langle p_{m,i},p_{m,j}\rangle_{H^2(\partial\mathbb B_d)}
			=\delta_{ij}.
		\end{aligned}
		\]
		Since \(\dim\mathcal P_m^{(d)}=D_{m,d}\), the family
		$\{e_{m,k}:1\leq k\leq D_{m,d}\}$ is an orthonormal basis of
		$\mathcal P_m^{(d)}$. Moreover,
		\[
		H=\overline{\bigoplus_{m\geq0}\mathcal P_m^{(d)}}^{\,\|\cdot\|_H}.
		\]
		Therefore, $\{e_{m,k}:m\geq0,\ 1\leq k\leq D_{m,d}\}$ is an
		orthonormal basis of $H$ consisting of homogeneous holomorphic
		polynomials.
		Furthermore, by \eqref{hardy},
		\[
		\begin{aligned}
			\sup_{m\ge0}\sup_{1\le k\le D_{m,d}}\|e_{m,k}\|_\infty
			&\le
			\left(\sup_{m\ge0}\frac{a_m}{D_{m,d}}\right)^{\frac12}
			\sup_{m\ge0}\sup_{1\le k\le D_{m,d}}\|p_{m,k}\|_\infty\\
			&\le A^d d^{\frac14}\sqrt{(d-1)!}
			\left(\sup_{m\ge0}\frac{a_m}{D_{m,d}}\right)^{\frac12}\\
			&\leq \exp\left(\frac12d\log d+O(d)\right)\left(\sup_{m\ge0}\frac{a_m}{D_{m,d}}\right)^{\frac12} .
		\end{aligned}
		\]
		
		\noindent\textit{\((ii)\Rightarrow(i)\).}
		Suppose that $H$ admits an orthonormal basis
		$\{q_n\}_{n=1}^\infty$ consisting of homogeneous holomorphic
		polynomials such that $\sup\limits_{n\ge1}\|q_n\|_\infty<\infty$.
		For each $m\ge0$, there exists a basis vector
		$q_{n_m}\in\mathcal P_m^{(d)}$.
		Otherwise, $\mathcal P_m^{(d)}\perp H$, a contradiction.
		Since $\|q_{n_m}\|_H=1$, we have
		\[
		\frac{a_m}{D_{m,d}}
		=\frac{a_m}{D_{m,d}}\|q_{n_m}\|_H^2
		=\|q_{n_m}\|_{H^2(\partial\mathbb B_d)}^2
		\le\|q_{n_m}\|_\infty^2
		\le\sup_{n\ge1}\|q_n\|_\infty^2
		<\infty.
		\]
		Taking the supremum over $m\ge0$ proves \textup{(i)}.
\end{proof}

\noindent{\bf Declaration of AI assistance.} Dr.~Zeyou Zhu came across the problem of the existence of inner functions while reading Rudin's
\emph{Function Theory in the Unit Ball of $\mathbb C^n$}.
He then found Bourgain's work on uniformly bounded
orthonormal bases, which can be used to construct
inner functions.
Following this, we began studying Rudin's homogeneous
basis problem about two years ago.
The problem later became the topic of Dr.~Fan Zhang's
doctoral dissertation. At first, we only obtained the existence of uniformly bounded orthonormal bases in unitarily invariant reproducing kernel Hilbert spaces for $d=2,3$. 
At the beginning of 2026, we designed the outline of our study on this problem. 
With the help of ChatGPT (OpenAI), we optimized the research methods for this problem in May and got the solution in August 2026. 
During the preparation of this manuscript, ChatGPT (OpenAI) assisted with checking intermediate computations and estimates, revising the language, and preparing the LaTeX source.
The authors take responsibility for the mathematical statements, proofs, and final text.

\noindent{\bf Additional Statement on This Paper.} After completing our manuscript, we became aware of the work
of Gon\c{c}alves, Ferreira and Radziwi{\l}{\l}
\cite{GoncalvesFerreiraRadziwill2026}, which had recently
been posted on arXiv and independently resolves Rudin's problem.
After comparing our work with that of Gon\c{c}alves, Ferreira
	and Radziwi{\l}{\l}, we find that our approach and basis
	construction differ from theirs.
Furthermore, we obtain an explicit bound for the supremum norms
of the basis elements in terms of the dimension.
More precisely, as proved in Corollary~\ref{cor:rudin},
\[
\|p_{N,j}\|_\infty
\leq \exp\left(\frac12 d\log d+O(d)\right),
\qquad 1\leq j\leq D_{N,d},
\]
where the implicit constant in $O(d)$ is independent of $d$, $N$, and $j$.
Gon\c{c}alves, Ferreira and Radziwi{\l}{\l}
	\cite[p.~2, first comment after Theorem~1]
	{GoncalvesFerreiraRadziwill2026}
	also discuss the dependence of their bound on the dimension.
	They note that an explicit constant can be obtained from
	their proof and expect an upper bound of the form
	$\exp(\exp(Cd\log d))$, where $C>0$ is a constant.

\bigskip

\noindent Penghui Wang\\
School of Mathematics, Shandong University,
Jinan, 250100, China\\
E-mail: \texttt{phwang@sdu.edu.cn}

\medskip

\noindent Fan Zhang\\
School of Mathematics, Shandong University,
Jinan, 250100, China\\
E-mail: \texttt{202211823@sdu.edu.cn}

\medskip

\noindent Zeyou Zhu\\
School of Mathematical Sciences, Fudan University,
Shanghai, 200433, China\\
E-mail: \texttt{zhuzeyou@fudan.edu.cn}

\end{document}